\def\arXiv#1#2{\href{http://front.math.ucdavis.edu/#1}{{\tt arXiv:#1 [#2]}}}
\tikzset{
    >=stealth,
    auto,
    node distance=2cm,
    font=\tiny,
    state/.style={circle,draw,minimum size=30pt ,thick,align=center}
}
\newtheorem{theo}{Theorem}[section]
\newtheorem{lemma}[theo]{Lemma}
\newtheorem{propo}[theo]{Proposition}
\newtheorem{conj}[theo]{Conjecture}
\numberwithin{equation}{section}
\numberwithin{figure}{section}
\newcommand{\zar}[1]{\left(#1\right)}
\newcommand{\norm}[1]{\left\lVert#1\right\rVert}
\def\mix{\mathrm{mix}}
\def\IP{\mathrm{IP}}
\def\Binom{\mathsf{Binom}}
\def\bgiv{\,\big|\,}
\def\Bgiv{\,\Big|\,}
\def\p{\partial}
\def\cG{\mathcal{G}}
\def\cB{\mathcal{B}}
\def\cE{\mathcal{E}}
\def\lora{\longrightarrow}
\def\K{\mathcal{K}}
\def\ind#1{\mathds{1}\lbrace#1\rbrace}
\def\indic#1#2{f_{#1}^{#2}}
\def\1{\mathds{1}}
\def\Pss#1#2{\P_{#1}\zar{#2}}
\def\Ess#1#2{\E_{#1}\zar{#2}}
\def\Gnm{\hat{G}_{n,m}}
\def\R{\mathbb{R}}
\def\Z{\mathbb{Z}}
\def\eps{\varepsilon}
\def\hatpr#1{\hat{X}_{#1}}
\def\tsproc#1{\widetilde{X}_{#1}}
\def\O{\mathcal{O}}
\def\E{\mathbb{E}}
\def\P{\mathbb{P}}
\def\Var{\mbox{\rm Var}}
\def\md{\mid}
\def\Bb#1#2{{\def\md{\bigm| }#1\bigl(#2\bigr)}}
\def\BB#1#2{{\def\md{\Bigm| }#1\Bigl(#2\Bigr)}}
\def\Bs#1#2{{\def\md{\mid}#1(#2)}}
\def\Pb{\Bb\P}
\def\Eb{\Bb\E}
\def\PB{\BB\P}
\def\EB{\BB\E}
\def\Ps{\Bs\P}
\def\Es{\Bs\E}
\def\Vars{\Bs\Var}
\def\Varb{\Bb\Var}
\def\VarB{\BB\Var}
\def\eref#1{(\ref{#1})}
\def \proof {{ \medbreak \noindent {\bf Proof.} }}
\def\proofof#1{{ \medbreak \noindent {\bf Proof of #1.} }}
\def\qqed{\qed\medskip}
\title{Mixing time and cutoff phenomenon\\ for the interchange process on dumbbell graphs\\
and the labelled exclusion process on the complete graph}
\author{Rich\'ard Patk\'o and G\'abor Pete \footnote{
'R\'enyi Institute, Not The Hungarian Academy of Sciences, and Institute of Mathematics, Budapest University of Technology and Economics. \url{http://www.math.bme.hu/\~rpatko} and  \url{http://www.math.bme.hu/\~gabor}.}
}
\date{}
\begin{document}
\maketitle

\abstract{We find the total variation mixing time of the interchange process on the dumbbell graph (two complete graphs, $K_n$ and $K_m$, connected by a single edge), and show that this sequence of chains exhibits the cutoff phenomenon precisely when the smaller size $m$ goes to infinity. The mixing time undergoes a phase transition at $m\asymp \sqrt{n}$. We also state a conjecture on when exactly cutoff holds for the interchange process on general graphs. 

Our proofs use coupling methods, and they also give the mixing time of the simple exclusion process of $k$ labelled particles in the complete graph $K_n$, for any $k\leq n$, with cutoff, as conjectured by Lacoin and Leblond (2011). In particular, this is a new probabilistic proof for the mixing time of random transpositions, first established by Diaconis and Shahshahani (1981).}

\section{Introduction}

Given a connected finite graph $G\zar{V,E}$ on $N$ vertices, the discrete time {\bf lazy interchange process} is a group-invariant random walk on the symmetric group $S_N$, where each step of the walk consists of either staying put with probability one half, or multiplying the element $\sigma_t$ at time $t$ with a transposition $(v_1v_2)$, where $(v_1,v_2)$ is an edge of $G$ selected uniformly at random.

The mixing time of an interchange process was first studied by Diaconis and Shahshahani \cite{diaconis}, who proved, using the representation theory of the symmetric group, that the {\bf total variation mixing time} of the lazy process on the complete graph $K_n$ is $({1}+o(1)) \, n\log n$, with {\bf cutoff phenomenon}: the total variation distance of the distribution of the current location from stationarity decreases from around 1 to around 0 abruptly, in a time window negligible compared to the mixing time itself (see Subsection~\ref{ss.def} below for precise definitions). Probabilistic proofs were given in \cite{Matt,BSZ,BeSe}. For general graphs, a central result is the proof of Aldous' conjecture by Caputo, Liggett, and Richthammer \cite{liggett}: the spectral gap of the process is always determined by the spectral gap of simple random walk on the underlying graph itself. However, understanding the mixing time requires more than just finding the spectral gap; results on different graph families have been obtained, in chronological order, by Jonasson \cite{jonasson}, Erikshed \cite{erik}, Oliveira \cite{oliveira}, Lacoin \cite{lacoin,lacoinProfile}, Hermon and Pymar \cite{HerPym}, Alon and Kozma \cite{octopi}, Hermon and Salez \cite{HerSal}.

The question of finding conditions that ensure or forbid the cutoff phenomenon was posed by Aldous and Diaconis \cite{cutoff}, and has been studied in many papers since then; see \cite{DLP,BHP} and the references therein. In any sequence of Markov chains, if the product of the spectral gap and the mixing time does not tend to infinity (i.e., the so-called {\bf product condition} fails), then  cutoff cannot hold, as shown by the eigenvector corresponding to the second largest eigenvalue \cite[Section 18.3]{LPW}. One may speculate that if a sequence of ``natural'' Markov chains does not have this obvious obstacle, then cutoff does hold. For transitive Markov chains, there is an example due to Pak that satisfies the product condition while has no cutoff \cite[ibid.]{LPW}, but the case of interchange processes is wide open. From a different point of view, Cayley graphs of finite simple groups tend to be {\bf expanders} \cite{helfgott}, where the product condition obviously holds, and cutoff is conjectured by Peres \cite{LubPer}. Hence it is not immediately obvious if there are any graphs where the interchange process (which is a random walk on a Cayley graph of $S_N$, a $\Z_2$-extension of the simple group $A_N$) does not satisfy the product condition.

The {\bf dumbbell graphs} are typical examples with bad mixing properties for the simple random walk, hence it is natural to investigate what happens for the interchange process on them. To state our first result, we let $t_{\mathrm{mix}}\zar{\varepsilon}$ denote, as usual, the smallest time when the total variation distance of the chain from stationarity, started from a fixed vertex, gets below $\eps$. 

\begin{theo}[{\bf Dumbbell interchange}]\label{maintheo}
Let $G=G_{n,m}$ be the graph consisting of two complete graphs with $n$ and $m$ vertices respectively, connected by a single edge. Let us assume that $m=m(n)$ is a function of $n$ such that $m\leq n$. Note that the edges set has size $|E|\sim (m^2+n^2)/2$. Then we have the following for the $\frac12$-lazy interchange process on this graph:
\begin{enumerate}
\item[i)] If $\exists\:c>0$, such that $c\sqrt{n}\leq m \leq n$, then,  for all $0<\eps<1/2$,
\begin{equation}\label{e.largem}
(1-o(1))\frac{|E|nm}{n+m}\log {n}\leq t_{\mathrm{mix}}(\eps)\leq(1+o(1))\frac{|E|nm}{n+m}\log {n}
\end{equation}
\item[ii)] If we have $1\ll m=m(n)\ll\sqrt{n}$, then,  for all $0<\eps<1/2$,
\begin{equation}\label{e.smallm}
(2-o(1))|E|m\log {m}\leq t_{\mathrm{mix}}(\eps)\leq(2+o(1))|E|m\log {m}
\end{equation}
\item[iii)] If $m$ remains bounded, then, for all large enough $n$ and all $0<\eps<1/2$,
\begin{equation}\label{e.constm}
A(\eps,m)\, |E| \leq t_{\mathrm{mix}}(\eps) \leq B(\eps,m)\, |E|,
\end{equation}
where $A(\eps_m,m) > B(1/4,m)$ if $\eps_m$ is small enough.
\end{enumerate}
In particular, the interchange process has cutoff if and only if $m(n)\rightarrow\infty$, which is exactly when the product condition holds.
\end{theo}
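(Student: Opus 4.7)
The plan is to prove the three upper and three lower bounds separately, and then extract cutoff and the product-condition statement from them. Throughout I would use the labelled-particle (SEP) view of the interchange process: a state assigns labels $1,\ldots,n+m$ to vertices, and one step either stays put or swaps the labels at the endpoints of a uniformly chosen edge. The key quantity in every regime is the firing rate of the unique bridge edge, $\frac{1}{2|E|}$ per step, so in time $T$ one sees roughly $T/(2|E|)$ (approximately Poisson) bridge firings, and these are the only mechanism for exchanging labels between the two cliques.

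For the lower bounds I would track the distinguishing statistic $N_t$, the number of particles originally in $K_n$ that currently lie in $K_m$. Under stationarity $N_\infty$ is hypergeometric with mean $nm/(n+m)$, whereas $N_t$ changes by at most one per bridge firing. A drift-and-concentration argument shows $N_t$ is too small to match stationarity unless the number of firings reaches order $nm/(n+m)$, which already gives the right order of magnitude in (i). To upgrade to the sharp constant together with the $\log n$ (resp.\ $\log m$) factor in (i), resp.\ (ii), I would, in the spirit of Diaconis--Shahshahani, also track the set of ``untouched'' labels---particles originally in the bottleneck side that have never crossed the bridge---and show that this set remains non-empty until the stated time. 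In (iii) the lower bound follows already from the fact that even the single-particle random walk on $G_{n,m}$ needs order $|E|$ steps to mix.

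For the upper bounds the plan is a two-stage coupling of $\sigma_t$ started from the identity with $\tilde\sigma_t$ started from stationarity. Stage one couples the partition of labels between $K_n$ and $K_m$: a greedy coupling of bridge firings combined with a coupon-collector accounting over the bottleneck side (the $n$-side in (i), the $m$-side in (ii), with crossover at $m\asymp\sqrt n$) equalizes the macroscopic configurations in the times asserted in (i) and (ii). Stage two, conditionally on matching partitions, couples the labels within each clique using the fast internal mixing on $K_n$ and $K_m$ (time $\O(n\log n)$ inside $K_n$, negligible against stage one); a censoring-style argument, or conditioning on the bridge firing times, makes the decomposition rigorous. Regime (iii) uses an analogous but much simpler coupling.

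Cutoff then follows in (i)--(ii) because the upper and lower bounds agree to $(1\pm o(1))$ with leading constants independent of $\eps\in(0,1/2)$, forcing $t_{\mix}(\eps)/t_{\mix}(\eps')\to 1$; in (iii) the hypothesis $A(\eps_m,m)>B(1/4,m)$ yields the opposite inequality, ruling out cutoff. For the product condition I would invoke Caputo--Liggett--Richthammer to identify the interchange spectral gap with that of the single-particle random walk on $G_{n,m}$, and combine a Rayleigh quotient using the test function $\1_{K_m}$ (normalized to mean zero) with a length-three canonical-path estimate through the bridge to obtain $\text{gap}\asymp (n+m)/(|E|nm)$; then $t_{\mix}\cdot\text{gap}$ is $\Theta(\log n)$, $\Theta(\log m)$, or $\Theta(1)$ in (i)--(iii) respectively, so the product condition holds precisely when $m\to\infty$. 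I expect the hardest step to be stage one of the coupling: pinning down the leading constants---not just the correct order---across the phase transition at $m\asymp\sqrt n$, where the bottleneck switches between randomizing the $n$ large-clique labels (constant $1$, factor $\log n$) and sending out all $m$ small-clique labels often enough (constant $2$, factor $\log m$); this is where most of the analysis will concentrate and is what pins down the cutoff window stated in the theorem.
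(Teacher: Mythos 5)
Your overall architecture (the partition of labels between the cliques as the slow coordinate, internal clique mixing as the fast one, a counting statistic for the lower bounds, the single-particle gap for the product condition) is the same as the paper's, but the two steps you treat as upgrades of a routine argument are exactly where the real work lies, and the mechanisms you propose for them do not suffice. For the lower bound in regime (i): the set of particles on either side that have never crossed the bridge stays non-empty far beyond $\frac{|E|nm}{n+m}\log n$ (for $m\asymp n$ its expected size at that time is still about $n^{3/4}$), and, more importantly, ``particle $p$ has never crossed'' is not a distinguishing event against stationarity in this regime, since under $\pi$ a positive fraction of particles sit in their original clique anyway. The constant-$1$ logarithm in (i) comes from a second-moment argument on $N_t$: one must show that $\E N_t$ is still separated from $\E N_\infty=\frac{nm}{n+m}$ by more than both standard deviations, and $\Var N_t$ is not accessible from a single-particle computation --- it requires the covariance of the indicators of two distinct particles, i.e., the eigendecomposition of the three-state chain recording which cliques a \emph{pair} of particles occupies (Wilson's method is the alternative, and is sharp only for $m\asymp n$). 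Your ``drift-and-concentration'' as stated yields only the order $\frac{|E|nm}{n+m}$ without the logarithm, and the untouched-set idea supplies the logarithm only in regime (ii), where ``some red particle is still in $\K_2$'' genuinely has vanishing stationary probability.

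For the upper bounds, the stage-one/stage-two split is not justified by censoring or by conditioning on the bridge firing times alone, because on $G_{n,m}$ the bridge is attached to two distinguished vertices: right after a bridge firing the crossing particle sits at the bridge endpoint, so if the next firing comes too soon (``short runs,'' which do occur order $m^{\eps}\log m$ times on the relevant time scale) the conditional law within the cliques given the partition is not uniform and the two coordinates are not independent. The paper's device is to replace the bridge by $nm$ weighted thin bridges, so that on the symmetrized graph the partition chain is exactly a time-changed Bernoulli--Laplace chain and the conditional within-clique law is exactly uniform, and then to prove a separate, genuinely delicate coupling between the original and symmetrized processes (Proposition~\ref{p.coupling1}) that handles short runs; for bounded $m$ a half-symmetrization is needed instead. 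Without this (or an equivalent substitute) the decomposition is a heuristic. Moreover, a coupon-collector accounting over the $n$-side overshoots the constant in (i) (it gives $2|E|n\log n$ rather than $\frac{|E|nm}{n+m}\log n$); the sharp constant comes from a monotone coupling of two Bernoulli--Laplace chains whose difference first contracts in expectation over time $\frac{|E|nm}{n+m}\log n$ and is then killed by a diffusive hitting-time estimate costing only $O(|E|m)$. The rest of your plan --- regime (ii) by coupon collection over the $m$ red particles, regime (iii) by hitting times, and the product condition from the single-particle gap $\asymp\frac{n+m}{|E|nm}$ (for which one does not even need Caputo--Liggett--Richthammer, since a projection's gap already upper-bounds the interchange gap) --- is sound.
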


Inspired by these results, we state the following somewhat provocative conjecture. It is a formulation of the idea that the interchange process can fail to have cutoff only if some local obstacle governs the mixing time. This is not a purely graph theoretical characterization that would be immediate to check, but it still may be a good start.

\begin{conj}[{\bf Interchange cutoff}]\label{mainconj} Let $G_n=(V_n,E_n)$ be a sequence of finite simple graphs, with $t_\mix^\IP(G_n)$ denoting the total variation mixing time of the discrete time interchange process.
\begin{itemize}
\item[{\bf (i)}] Cutoff holds if and only if the product condition holds.
\item[{\bf (ii)}] If  $t_\mix^\IP(G_n)\gg |E_n|$, then cutoff holds.
\item[{\bf (iii)}] If $t_\mix^\IP(G_n) \leq O(|E_n|)$, then cutoff fails if and only if the graphs $G_n$ have {\bf bounded bad bottlenecks}: there exists $K<\infty$ such that, for all $n$ large enough, 
\begin{equation}\label{e.bbb}
 \exists \; W_n \subset V_n \text{ with }1 \leq |W_n| \leq K \text{ and } |\p_E W_n | \leq \frac{K\, |E_n|}{t_\mix^\IP(G_n)}\,, 
\end{equation}
where $\p_E S$ is the set of edges connecting $S$ with its complement.
\end{itemize}
\end{conj}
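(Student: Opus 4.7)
The conjecture has three interlocking parts, and my plan is to first show that (i) is a consequence of (ii) and (iii). By the Caputo--Liggett--Richthammer identity, the spectral gap of the interchange equals that of single-particle random walk on $G_n$, so the product condition reads $t_\mix^\IP(G_n)\cdot\mathrm{gap}(G_n)\to\infty$. I would then prove that, within the regime $t_\mix^\IP(G_n)=O(|E_n|)$, the product condition is equivalent to the absence of bounded bad bottlenecks in the sense of \eqref{e.bbb}. The easy direction is Cheeger: a witness set $W_n$ has edge conductance at most $K/t_\mix^\IP$, forcing $\mathrm{gap}(G_n)^{-1}\gtrsim t_\mix^\IP$. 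For the converse, if the product condition fails then a small-conductance set exists, and one argues that a minimising set of unbounded size would already push single-particle mixing above $|E_n|$, contradicting the assumed upper bound on the interchange mixing time. This equivalence reduces (i) to the combination of (ii) and (iii).

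For the non-cutoff direction of (iii) I would generalise the argument sketched for the small-$m$ dumbbell in Theorem~\ref{maintheo}(iii). Fix a bounded bad bottleneck $W_n$ and project the interchange onto the statistic counting how many of the initial labels of $W_n$ are still present there; each of its transitions occurs at rate $|\partial_E W_n|/|E_n|=O(1/t_\mix^\IP)$. Provided mixing inside $W_n$ and inside $V_n\setminus W_n$ happens on a strictly smaller time scale, which should hold by applying the conjecture inductively to the two complementary subgraphs, the whole total-variation profile reduces to that of a birth-and-death chain on $\{0,1,\dots,|W_n|\}$, a convex mixture of exponentials which is incompatible with cutoff.

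The hard directions---cutoff under the hypotheses of (ii), and cutoff under the no-bottleneck hypothesis of (iii)---are the core difficulty. In both cases my plan is to project interchange onto simple exclusion (again via Caputo--Liggett--Richthammer) and couple the exclusion through the chameleon process of Morris and Oliveira, possibly in the refined form of Hermon and Pymar, thereby reducing the question to concentration of absorption times of individual chameleon particles. The absence of bounded bad bottlenecks should then feed into a uniform one-block concentration estimate in the spirit of Hermon and Salez, turning entropy production into sharp mixing. In the regime $t_\mix^\IP\gg|E_n|$ of (ii) the situation is more tractable since every edge is typically traversed many times, and a modified log-Sobolev or entropy-decay bound should directly deliver the required cutoff window of size $o(t_\mix^\IP)$.

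The main obstacle is the hard direction of (iii); even the weaker statement that $t_\mix^\IP=\Theta(|E_n|)$ together with $\mathrm{gap}(G_n)^{-1}=o(t_\mix^\IP)$ forces cutoff would imply Peres's cutoff conjecture for the interchange on expanders, itself wide open. I expect the crucial technical step to be a uniform concentration result for hitting times of bounded vertex sets across all graph sequences satisfying the no-bottleneck hypothesis---essentially a robust quantitative Aldous--Brown formula---combined with a coupling that disentangles the macroscopic dispersion of particles from their microscopic equilibration within locally well-connected regions.
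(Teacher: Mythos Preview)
This statement is a \emph{conjecture}, not a theorem; the paper explicitly presents it as open and does not attempt to prove any of the hard directions. What the paper does prove is Proposition~\ref{p.bbb}: that the existence of bounded bad bottlenecks forces the product condition to fail (hence no cutoff), and that no analogous sequence with $|W_n|\to\infty$ can exist. Your proposal is therefore not a proof but a research programme, and you yourself recognise this when you note that the hard direction of (iii) would imply Peres's expander cutoff conjecture.

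Even on the one piece the paper does establish, your argument has a gap. You write that ``a witness set $W_n$ has edge conductance at most $K/t_\mix^\IP$, forcing $\mathrm{gap}(G_n)^{-1}\gtrsim t_\mix^\IP$'' via Cheeger on the single-particle walk. But the conductance of $W_n$ for the single-particle walk is $|\partial_E W_n|/\mathrm{vol}(W_n)$, and the hypothesis only controls the numerator by $K|E_n|/t_\mix^\IP$; since $\mathrm{vol}(W_n)$ can be as small as $|\partial_E W_n|$ itself (or a bounded constant), the ratio can be $\Theta(1)$. Concretely, on the dumbbell with $m$ constant and $W_n=\K_2$, one has $\Phi(W_n)\asymp 1$ while the true gap is $\asymp 1/n^2$. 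Cheeger's easy direction therefore gives nothing here. The paper's proof of Proposition~\ref{p.bbb}(i) proceeds differently: it picks a single vertex $v\in W_n$, observes that $\deg(v)\le |W_n|+|\partial_E W_n|\le \tilde K|E_n|/t_\mix^\IP$, and bounds the interchange gap directly via the test function $f(\sigma)=\1_{\{\sigma(v)=v\}}$ on $S_N$, obtaining $\cE(f)/\|f\|_2^2=\deg(v)/|E_n|$. This exploits that the stationary measure of the interchange is uniform rather than degree-biased, which is exactly what makes a single-vertex obstruction visible; the same obstruction is invisible to Cheeger on the single-particle chain.

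Your treatment of the ``non-cutoff direction of (iii)'' is also more elaborate than needed: once bounded bad bottlenecks imply failure of the product condition, no cutoff follows immediately from the standard fact that the product condition is necessary for cutoff. There is no need for an inductive reduction to a birth--and--death chain, and the inductive step you propose (applying the conjecture to the two complementary subgraphs) would in any case be circular.
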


One direction is easy: we will show in Proposition~\ref{p.bbb} that the existence of the bounded bad bottlenecks $W_n$ implies that the product condition fails. Also, one simply cannot have $W_n\subset V_n$ with $|W_n|\to \infty$ and $|\partial_E W_n | = {O( |E_n|)}/{t_\mix^\IP(G_n)}$. Let us mention that our conjecture seems to be closely related to the natural conjecture that Hermon and Pymar's \cite[Theorem 1.4]{HerPym} holds with an exponent $1/2$ instead of $1/4$; furthermore, it is consistent with the conjectures of \cite{oliveira} and \cite{HerPym} comparing the mixing time of the interchange process with the mixing time of independent particles.
\medskip

In order to better place Theorem~\ref{maintheo} in context, and to state our second theorem, we need to say a few words about the proof of the first one. The first step is to replace the ``bridge'' edge connecting the two cliques with $nm$ ``thin bridges'', one edge for each pair of vertices in different cliques, with probability $\frac{1}{2|E|nm}$ of choosing the transposition represented by the edge. That is, we ``distribute the probability'' between the new edges equally, and hence the new process can be coupled to the original process so that one of the new edges is chosen exactly when the old edge is chosen. A key observation will be that these ``bridge'' transpositions happen so rarely that the {\it permutation of the particles within in each clique} typically has enough time to get mixed in between them. (We emphasize that this is only the typical behaviour, and there are still many instances of bridge transpositions occurring shortly after each other, causing non-trivial complications; see Proposition~\ref{p.coupling1}. Moreover, when $m$ is constant, we symmetrize only the connections to the $n$-clique; see Proposition~\ref{p.coupling2}. Nevertheless, to explain the big picture, let us stick to this simplistic view.) This has two consequences: 
\begin{itemize}
\item[{\bf (1)}] The original and the new process can be coupled so that mixing in the two happen simultaneously. Thus we can study the mixing time of the new (much more symmetric) process.
\item[{\bf (2)}] The mixing time of the new process is determined by the mixing time of how the particles are partitioned into the two cliques. Moreover, because of the symmetries of the new process, this partition  is always uniformly distributed among all possibilities with a given number of particles that are not in the clique where they started. Thus we only have to understand the mixing time of the {\it number of particles} that started in the smaller clique and are currently there. This process is just a time-changed version of the {\bf Bernoulli--Laplace diffusion model}, with urn sizes $n$ and $m$. In that model, at each step, one ball is chosen from each urn uniformly at random, then the two balls are switched. In our case, we make moves only when bridge transpositions happen.
\end{itemize}
 
Given this reduction, we need the mixing time of the Bernoulli--Laplace model, which has been determined by different methods in earlier works. Diaconis and Shahshahani  \cite{bernoulli}  use the representation theory of the symmetric group both for the lower and upper bounds. Their proof is spelled out for the $m=n$ case, establishing cutoff, and they mention that everything goes through for the case of general $m$, overlooking the phase transition (and the need for a different argument) for $m = O(\sqrt{n})$. An elementary algebraic approach was recently given in \cite{LimPick}. More probabilistically, after noticing that the  Bernoulli--Laplace model can be considered as a birth-and-death chain on $\{0,1,\dots,m\}$, the work of Ding, Lubetzky and Peres \cite{DLP} can be applied, who prove that cutoff for these chains is equivalent with the product condition, with mixing time given by the expected hitting time of the median of the stationary distribution, starting from the worse endpoint. However, even though one can write a recursion for expected hitting times, solving the recursion explicitly is not a completely trivial task analytically (done heuristically in \cite{LimPick}); moreover, for the case $m = o(\sqrt{n})$ it cannot give the precise answer, since the median is ``inside'' the $0$ state. Finally, Lacoin and Leblond \cite{lale} give a complete probabilistic treatment of the {\bf simple exclusion process} on the complete graph $K_n$, with $k=k(n)$ labelled or unlabelled particles, which means that the location of these particles only is followed during the interchange process. In the unlabelled case, by considering the set of original particle locations as one urn, the empty locations as the other urn, we get the Bernoulli--Laplace model. In that paper, cutoff for $k\gg 1$ and the phase transition at $k \asymp \sqrt{n}$ were established. The labelled case has some direct similarities with the interchange process on $\Gnm$; however, the exact mixing time was not found in \cite{lale} for the labelled process. The $L^2$-mixing time of the labelled process with $k\leq (1-\eps)n/2$, different from the total variation mixing time, was found in \cite{FoJo} using spectral arguments.

Our proofs for the Bernoulli--Laplace model are in parts similar to those of Lacoin and Leblond \cite{lale}. However, since we learnt about that paper only after our first draft was written, and we feel that our proofs are simpler at several places, we have decided to present these proofs in detail, keeping our paper self-contained. Moreover, we are able to complete their work on the labelled exclusion process, and prove the following theorem. After seeing our draft, Hubert Lacoin suggested that the strong stationary time method of Matthews \cite{Matt} might also generalize to the case of $k$ particles. This suggestion indeed seems to work, which would give a completely different proof.

\begin{theo}[{\bf Complete graph exclusion}]\label{t.lalenew}
For the $\frac12$-lazy exclusion process on the complete graph with $n$ vertices and $k$ labeled particles, for any $1\leq k\leq n$,
for every $\eps\in (0,1)$, we have
$$
t_\mix(\eps) = \, n\log k + O_\eps(n)\,,
$$
where the constant in the error term depends on $\eps$ but not on $k$.
\end{theo}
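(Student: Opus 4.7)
I would use a second-moment argument with the test statistic $Y_t = \sum_{i=1}^k \1\{X_t(i) = x_i^0\}$, the number of labeled particles still at their starting positions. Each particle leaves its current vertex at rate $1/n$ per step (its $n-1$ incident edges in $K_n$ are each selected-and-swapped with probability $1/(n(n-1))$), so
\[
\E[Y_t] \;\geq\; k\,(1-1/n)^t \;\geq\; k\, e^{-t/n}(1-o(1)),
\]
whereas under the stationary distribution $\E_\pi[Y] = k/n \leq 1$. Setting $t = n\log k - Cn$ yields $\E[Y_t] \geq e^C(1-o(1))$, which is $\gg 1$ for large $C$. The (positive) correlations among the $k$ Bernoulli indicators can be controlled using exchangeability of the dynamics, giving $\Var(Y_t) = O(\E[Y_t])$, so Chebyshev's inequality distinguishes the distribution from stationarity for $C = C(\eps)$ large, uniformly in $k$. (For $k \leq e^{C_\eps}$ the asserted lower bound is vacuous, so nothing is lost.)

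\textbf{Upper bound.} Following the Matthews-style strong stationary time suggested by Lacoin in the passage above the theorem, I would embed the $k$-exclusion into the full $\tfrac12$-lazy interchange on $K_n$ by assigning distinct ghost labels $k+1, \ldots, n$ to the empty sites, and recover the exclusion as the projection that forgets ghost identities. In the Matthews marking scheme, at each step a card becomes \emph{marked} through contact with an already-marked card, and once all $n$ cards are marked the interchange is uniform. Projecting, once cards $1, \ldots, k$ are marked, the exclusion state is uniform, so this defines a strong stationary time $\tau^{(k)}$.

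To bound $\E[\tau^{(k)}] \leq n \log k + O(n)$ I would split into two phases. The \emph{ignition phase} gets from a single seed mark to $\Theta(n)$ marked cards, which by a branching-style analysis (the marked set grows by a factor $\approx 1 + 1/n$ per step) takes $O(n)$ expected steps. In the \emph{collection phase}, once a positive fraction of the $n$ cards are marked, each remaining unmarked labeled card becomes marked at per-step rate $\Theta(1/n)$, so the remaining $k-j$ labeled particles are collected via a coupon-collector sum
\[
\sum_{j=0}^{k-1} \frac{n}{k-j} \;=\; n\,H_k \;=\; n \log k + O(n).
\]
A martingale/second-moment concentration argument on the number of unmarked labeled cards converts expectation into a tail bound, yielding $\P\bigl(\tau^{(k)} > n\log k + C_\eps n\bigr) < \eps$.

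\textbf{Main obstacle.} The delicate point is obtaining the leading constant exactly $1$ (rather than $2$ or $1/2$). A naive Broder-type scheme, which marks only the ``receiving'' endpoint of a swap involving a marked card, is off by a factor of $2$ in the collection phase; Matthews's refinement---marks propagate symmetrically, and $L_t=R_t$ self-loops also seed marks---is essential. One must verify carefully that, with this refinement, the marking rate of a specific unmarked labeled card in the collection phase is $(1-o(1))/n$ per step, uniformly in $k \leq n$, with errors of size $O(1/n^2)$ absorbed into the $O(n)$ term. If this turns out to be too fiddly, a backup route in the spirit of the paper's dumbbell coupling is available: first couple the unlabeled Bernoulli--Laplace exclusion in time $\tfrac{n\log k}{2} + O(n)$ and then couple a ``relabeling'' dynamics on the stabilized position set in another $\tfrac{n\log k}{2} + O(n)$, the two constants summing to the desired $n\log k + O(n)$.
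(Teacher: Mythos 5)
Your two bounds both take routes that differ from the paper's. For the lower bound, the paper simply imports Theorem~\ref{t.laleold} from Lacoin--Leblond \cite{lale}; your second-moment argument on the number of particles still at their initial positions is the classical Diaconis--Shahshahani-type statistic and is a legitimate self-contained alternative, provided you actually carry out the covariance computation via the two-particle projected chain (exactly as the paper does for the dumbbell in Subsection~\ref{ss.large1}) rather than waving at ``exchangeability''; that computation is where essentially all the content of the lower bound lives. For the upper bound, the paper does \emph{not} use a strong stationary time: it colours the particles that have never left the starting set $\K$, shows there are $o(\sqrt{k})$ of them at time $(1+\eps)n\log k$, and splits mixing into (1) the Bernoulli--Laplace count of particles currently in $\K$ and (2) the mixing of the leftover particles inside $\K$, the latter handled by the $k\ll\sqrt{n}$ case of \cite{lale}. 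The Matthews route you propose is precisely the alternative the authors attribute to Lacoin and explicitly leave unexecuted, so completing it would indeed give a genuinely different proof.

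There is, however, a genuine gap at the very first step of your upper bound: the assertion that ``once cards $1,\dots,k$ are marked, the exclusion state is uniform.'' The Broder/Matthews invariant is that the marked cards are uniformly arranged on the complementary positions \emph{conditionally on the positions of the unmarked cards}. At your time $\tau^{(k)}$ the unmarked cards are all ghosts, so cards $1,\dots,k$ are uniform on $[n]\setminus U$, where $U$ is the (known, random) set of positions of the unmarked ghosts; to get uniformity over all injections of $[k]$ into $[n]$ you would need $U$ to be an exchangeable subset of $[n]$, and it is not --- unmarked ghosts are biased towards still sitting at their initial locations in $\K^c$, so $[n]\setminus U$ is biased towards $\K$. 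The case $k=1$ already kills the claim as stated: $\tau^{(1)}$ can occur through a self-loop marking ($L_t=R_t$) that never moves card $1$, so conditionally on $\tau^{(1)}$ being small the card is essentially still at its starting vertex, not uniform. So $\tau^{(k)}$ is not a strong stationary time for the exclusion process; repairing this requires additionally controlling the law of the unmarked occupied set, which is exactly the Bernoulli--Laplace component that the paper's proof isolates as its item (1). The second soft spot, which you do flag, is the leading constant $1$ in the two-phase analysis; that is the entire content of \cite{Matt} and cannot be replaced by a generic branching-plus-coupon-collector count. Your ``backup route'' does not obviously close either issue: the relabelling dynamics on the stabilized occupied set is not an interchange process run at unit speed, and the claimed $\tfrac{n\log k}{2}+\tfrac{n\log k}{2}$ split is unsubstantiated.
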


We are working here with the $\frac12$-lazy processes only for convenience. Even without laziness, the exclusion process would be aperiodic for any $1\leq k\leq n-2$, and the result could be proved for the non-lazy version with mixing time halved. However, for $k=n-1$ or $n$, the process is periodic (there are odd and even permutations), hence something needs to be done about that. In the usual version for $k=n$, there is a natural $\frac1n$-laziness, which is large enough to produce mixing between even and odd permutations, but small enough to keep the mixing time at $\frac{1+o(1)}{2}n\log n$. To keep our results consistent for all values of $k$, we have decided to go with  $\frac12$-laziness.
\medskip
 
\noindent {\bf Sketch for the Bernoulli--Laplace model.} When $m \ll \sqrt{n}$, with high probability no original particle can be found in the smaller clique in the stationary distribution, and it is not hard to prove that the phenomenon of all particles leaving the smaller clique actually governs mixing. For $1 \ll m \ll \sqrt{n}$ one can rather directly compute the time needed for that, obtaining cutoff. For $m \asymp 1$, we have to work not with the Bernoulli--Laplace model, but with a ``half-symmetrized chain'', for which direct probabilistic arguments easily give the mixing time, with no cutoff. In the case $m=\Omega\zar{\sqrt{n}}$, mixing turns out to be governed by the number of original particles in the smaller clique being as close to the stationary mean as the stationary standard deviation. The lower bound is found by Chebyshev's inequality on the number of original particles residing in the larger clique. For this reason, the mean and variance of this quantity at time $t$, are calculated using the eigendecompositions of transition matrices of projections of the original chain. The upper bound is found by coupling of two copies of the Bernoulli--Laplace process. We also note that the mixing time for the $m\asymp\sqrt{n}$ case can be obtained by plugging in $m\asymp\sqrt{n}$ in either one of the two cases. 
\medskip

\noindent {\bf Sketch for the exclusion process.} We consider the set of positions where the particles start as one clique $\K$, and the starting empty locations as another clique $\K^c$, to get a process very similar to the previous ones. The particles leaving $\K$ are arriving at uniform random locations, hence the mixing time is governed by two phenomena: (i) the number of particles that are in $\K$ should be close to stationarity, which is basically a Bernoulli--Laplace process; (ii) the particles that have never left $\K$ should be well-mixed within $\K$, which is basically another exclusion process. The time $(1+o(1)) \, n\log k$ is just enough for item (i). On the other hand, the number of particles that have never left $\K$ during this time turns out to be $o(\sqrt{k})$, which takes us into the easier case of the exclusion process, and the number of transpositions happening within $\K$ is again just enough for item (ii). Of course, there are some complications coming from the fact that the ``particles that have never left $\K$'' are somewhat special, but this effect will turn out to be unimportant.
\medskip

\noindent {\bf Organization of paper.} In Section~\ref{s.prelim}, we present some basic definitions and results, including a proof of the easy direction of Conjecture~\ref{mainconj}. In Section~\ref{s.redu}, we present the reduction to the Bernoulli--Laplace model and what we call the half-symmetrized chain, including the proofs of items (1) and (2) above. In Section~\ref{s.symm}, we present the computations of the mixing time of the (half-)symmetrized chains, completing the proof of Theorem~\ref{maintheo}. Finally, in Section~\ref{s.exclu}, we prove Theorem~\ref{t.lalenew} on the mixing time of the exclusion processes.
\medskip

\noindent {\bf Acknowledgments.} We are grateful to Bal\'azs R\'ath, Jonathan Hermon and Hubert Lacoin for useful discussions, comments and references. Our work was supported by the ERC Consolidator Grant 772466 ``NOISE'', and by the Hungarian National Research, Development and Innovation Office, NKFIH grant K109684.

\section{Preliminaries}\label{s.prelim}

\subsection{Mixing time definitions}\label{ss.def}

If $\mu$ and $\nu$ are probability measures on the finite set $\Omega$, then the {\bf total variation distance}, denoted by $\norm{\mu-\nu}$, is defined as
$$\norm{\mu-\nu}:=\max_{A\subset \Omega}|\mu\zar{A}-\nu\zar{A}|.$$

Let $X^{(n)}$ be a sequence of irreducible aperiodic Markov chains on the (finite) state spaces $\Omega^{(n)}$. Let $P_n^t(\cdot,\cdot)$ be the $t$-step transition matrix, and let $\pi_n$ be the stationary distribution of $X^{(n)}$. Let 
$$d_n(t):=\max_{x\in \Omega^{(n)}}\norm{P_n^t(x,\cdot)-\pi_n}.$$ 
Then, for $0<\varepsilon< 1$, the $\varepsilon$-mixing time of $X^{(n)}$ is defined as $t_{\mathrm{mix}}^{(n)}\zar{\varepsilon}=\inf\lbrace t>0\mid d_n(t)<\varepsilon \rbrace$. By {\bf cutoff phenomenon} for the sequence $X^{(n)}$ we mean that, for any $0<\varepsilon< 1$,
$$
\frac{t_{\mathrm{mix}}^{(n)}(\varepsilon)}{t_{\mathrm{mix}}^{n}(1-\varepsilon)}\rightarrow 1,\:\:\:\text{as}\:\:n\rightarrow\infty.
$$

Our Markov chains will be reversible and lazy, hence the eigenvalues of $P=P_n$ are $0\leq \lambda_n \leq \dots \leq \lambda_1=1$. For the {\bf spectral gap}, the Dirichlet variational formula \cite[Lemma 13.7]{LPW} says that
\begin{equation}\label{e.gapDir}
1-\lambda_2 = \inf \left\{\frac{\cE(f)}{ \|f\|_2^2 } : \sum_{x} f(x)\pi(x)=0 \right\},
\end{equation} 
where 
$$
\cE(f):=\frac{1}{2} \sum_{x,y\in\Omega} \big(f(x)-f(y)\big)^2 \pi(x) P(x,y)
\qquad\text{and}\qquad
\|f\|_2^2:=\sum_{x\in\Omega} f(x)^2 \pi(x)\,.
$$
The relaxation time is $t_{\mathrm{relax}}^{(n)}:=1/(1-\lambda_2^{(n)})$, and it is easy to see that the {\bf product condition}
\begin{equation}\label{e.prodcond}
t_{\mathrm{relax}}^{(n)} \ll t_{\mathrm{mix}}^{(n)}\,,
\quad\text{i.e.,}\quad
(1-\lambda_2^{(n)})\,  t_{\mathrm{mix}}^{(n)} \to \infty
\end{equation}
must be satisfied in order for the cutoff phenomenon to hold \cite[Section 18.3]{LPW}.

\subsection{Basics of the stationary distribution and the symmetrized chain}\label{ss.basic}

In the dumbbell graph $G_{n,m}$, let $\K_1$ and $\K_2$ be the two cliques, with vertex sets identified with $\lbrace 1,\ldots, n\rbrace$ and $\lbrace n+1,\ldots, n+m\rbrace$, respectively, with a single edge connecting $n$ and $n+1$. The stationary distribution of the interchange process is of course uniform on the symmetric group $S_N$, with $N=n+m$. As a corollary, the stationary distribution of the number of particles from $\lbrace n+1,\ldots, n+m\rbrace$ that reside in $\K_1$ is the {\bf hypergeometric distribution} $\mathrm{HypGeom}(N,m,n)$: from a population size $N$, with $m$ marked individuals, in a sample of size $n$, this is the random number of marked individuals. It is well-known and not hard to prove that 
\begin{equation}\label{e.HypGeom}
\begin{aligned}
\EB{ \mathrm{HypGeom}(n+m,m,n) } &= \frac{m n}{m+n}\,,\\
\Var \Big( \mathrm{HypGeom}(n+m,m,n) \Big) &= \frac{m^2 n^2}{(m+n)^2(m+n-1)} .
\end{aligned}
\end{equation}

%

As mentioned in the Introduction, we will consider a {\bf symmetrized underlying graph} $\Gnm$: we replace the ``bridge'' edge $(n,n+1)$ with $nm$ ``thin bridges'', one edge $(i,j)$ for each $i\in\K_1$ and $j\in\K_2$ with ``weight'' $1/(nm)$, meaning that, in the interchange process, the probability of choosing one of these edges is $1/(2|E|nm)$ instead of the usual ${1}/{(2|E|)}$. 

The virtue of this symmetrization is that now the projection of the interchange process that follows the movement of a single particle between the cliques is still Markovian, shown on Figure~\ref{f.M2}.

\begin{figure}[h!]
\centering
\begin{tikzpicture}
\node[state] (1) {$\K_1$};
\node[state] (2) [right of=1] {$\K_2$};
\draw[-latex] (1) to[bend left=30] node[above] {$\frac{1}{2|E|n}$} (2);
\draw[-latex] (2) to[bend left=30] node[below] {$\frac{1}{2|E|m}$} (1);
\path[]
    (1) edge [loop left] node {$1-\frac{1}{2|E|n}$} (1)
    (2) edge [loop right] node {$1-\frac{1}{2|E|m}$} (2);
\end{tikzpicture}
\caption{Following a single particle between the cliques in the interchange process over $\Gnm$.}\label{f.M2}
\end{figure}
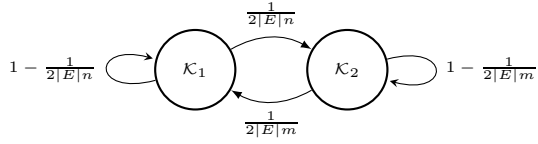

The eigenvalues of the transition matrix of this Markov chain are $1$ and $1-\frac{n+m}{2|E|nm} $, with right eigenvectors 
$(1,\, 1)$ and $\zar{-\frac{m}{n},\, 1}$, which form an orthonormal basis w.r.t.~the stationary distribution $\zar{\frac{n}{n+m},\,\frac{m}{n+m}}$. The spectral gap $\frac{n+m}{2|E|nm}$ of this projection is obviously an upper bound on the spectral gap of the original chain, hence we get a lower bound $c_\eps \frac{|E|nm}{n+m}$ on the $\eps$-mixing time; see \cite[Theorem 12.5]{LPW}. A better lower bound is given by {\bf Wilson's method} \cite{Wilson}, \cite[Theorem 13.28]{LPW}: if we denote the second eigenvalue by $\lambda_2$, and the second eigenvector, as a function on the state space $(\K_1,\K_2)$ by $\phi_2$, then we can lift it to an eigenvector $\Phi$ of the interchange process on $\Gnm$, with the same eigenvalue, by 
$$
\Phi(\sigma):=\sum_{i=n+1}^{n+m}\phi_2(\sigma_i)\,;
$$ in words, we look at where $\sigma$ took the particles of $\K_2$ and count $-\frac{m}{n}$ for all that have been moved to $\K_1$ and $1$ for all that have not been moved out of $\K_2$. 
One can easily show that 
$$
R:=\max_{\sigma\in S_{n+m}}\E_{X_0=\sigma} \big|\Phi(X_1)-\Phi(\sigma)\big|^2 = \frac{1}{2|E|}\zar{1+\frac{m}{n}}^2,
$$
and then Wilson's bound is
\begin{equation}\label{wilson}
\begin{aligned}
t_{\text{mix}}^{n,m}\geq  \frac{1}{2\log\zar{\frac{1}{\lambda_2}}}\log\zar{\frac{\Phi(\mathrm{id})^2(1-\lambda_2)}{2R}}\sim \frac{|E|mn}{m+n}\log\zar{\frac{mn}{m+n}}.
\end{aligned}
\end{equation}
Our results will show that this bound is useful only in the extreme cases: when $m\asymp 1$, it demonstrates a lack of cutoff, while in the case of $m \asymp n$ it is actually sharp. 

\subsection{Bounded bad bottlenecks}\label{ss.bbb}

We will now prove the following proposition regarding the easy direction of Conjecture~\ref{mainconj}:

\begin{propo}\label{p.bbb}
Let $G_n=(V_n,E_n)$ be a sequence of finite simple graphs, and let $t_\mix^\IP(G_n)$ denote the total variation mixing time of the discrete time interchange process over $G_n$.
\begin{itemize}
\item[{\bf (i)}] If the bounded bad bottlenecks~(\ref{e.bbb}) exist, then the product condition~(\ref{e.prodcond}) does not hold.
\item[{\bf (ii)}] There cannot exists $W_n\subset V_n$ with $|W_n|\to \infty$, $|W_n|<|V_n|/2$, and $|\partial_E W_n | = {O( |E_n|)}/{t_\mix^\IP(G_n)}$.
\end{itemize}
\end{propo}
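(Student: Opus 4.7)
The plan is to upper-bound the spectral gap of the interchange process by that of its single-particle projection and then apply the Dirichlet variational formula \eref{e.gapDir} to a natural test function. Tracking the position of any one particle in the interchange process yields a Markov chain on $V_n$ with uniform stationary distribution and transition $P(x,y)=1/(2|E_n|)$ across each edge of $G_n$; any eigenfunction $\phi$ of this single-particle walk with eigenvalue $\lambda$ lifts to the eigenfunction $\Phi(\sigma):=\phi(\sigma(\ell))$ of the full interchange process with the same eigenvalue, so the spectral gap of the interchange process is at most that of the single-particle walk. (This is the easy direction; the matching lower bound is the Caputo--Liggett--Richthammer theorem \cite{liggett}, which is not needed here.)

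Choosing the test function $f(x)=\ind{x\in W_n}$ on $V_n$, a direct computation yields $\cE(f)=|\partial_E W_n|/(2\,|V_n|\,|E_n|)$ and $\Var_\pi f=|W_n|(N-|W_n|)/N^{2}$, where $N:=|V_n|$. Hence by \eref{e.gapDir},
\[
1-\lambda_2^{(n)} \;\leq\; \frac{\cE(f)}{\Var_\pi f} \;=\; \frac{|\partial_E W_n|\,N}{2\,|E_n|\,|W_n|(N-|W_n|)}.
\]

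For part (i), the hypothesis $|W_n|\leq K$ gives $N-|W_n|=(1+o(1))N$ and $|W_n|\geq 1$. Substituting $|\partial_E W_n|\leq K|E_n|/t_\mix^\IP(G_n)$ from \eref{e.bbb} into the displayed bound then produces $(1-\lambda_2^{(n)})\,t_\mix^\IP(G_n)\leq K/2 + o(1)$, which is bounded; so the product condition \eref{e.prodcond} fails.

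For part (ii), suppose for contradiction such a $W_n$ exists. The constraint $|W_n|<N/2$ gives $N-|W_n|>N/2$, so the displayed bound simplifies to $1-\lambda_2^{(n)}\leq |\partial_E W_n|/(|E_n|\,|W_n|)=O(1)/\bigl(|W_n|\,t_\mix^\IP(G_n)\bigr)$, i.e., $t_{\mathrm{relax}}^{(n)}\geq c\,|W_n|\,t_\mix^\IP(G_n)$. But the standard reversible-chain inequality $t_\mix(\eps)\geq(t_{\mathrm{relax}}-1)\log(1/(2\eps))$ \cite[Section~12]{LPW} forces $t_{\mathrm{relax}}^{(n)}=O(t_\mix^\IP(G_n))$, whence $|W_n|=O(1)$, contradicting $|W_n|\to\infty$. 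I do not foresee any serious obstacle: the only substantive step is the Dirichlet-form calculation for $f$, which is a routine projection-argument; everything else is order-of-magnitude comparison.
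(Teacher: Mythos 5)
Your argument is correct, but it takes a genuinely different route from the paper's, most notably in part (ii). For part (i) the paper also bounds the gap via the Dirichlet formula \eref{e.gapDir}, but with a different test function: it first notes that any $v\in W_n$ satisfies $\deg(v)\leq |W_n|+|\p_E W_n|\leq \tilde K |E_n|/t_\mix^\IP(G_n)$, and then tests with $f(\sigma)=\mathbf{1}_{\{\sigma(v)=v\}}$ to get $t_{\mathrm{relax}}\geq |E_n|/\deg(v)\geq t_\mix^\IP(G_n)/\tilde K$; your choice $f=\ind{\cdot\in W_n}$, pushed through the single-particle projection, reaches the same conclusion and has the advantage of feeding directly into part (ii). (Your computations $\cE(f)=|\p_E W_n|/(2N|E_n|)$ and $\Var_\pi f=|W_n|(N-|W_n|)/N^2$ are right for the single-particle walk with $P(x,y)=1/(2|E_n|)$ per edge, and the eigenfunction lifting is the same device the paper uses in Subsection~\ref{ss.basic}.) For part (ii) the paper argues probabilistically rather than spectrally: in $t_\mix^\IP(G_n)$ steps the expected number of transpositions across $\p_E W_n$ is $O(1)$, so the number of particles that have left $W_n$ by the mixing time is tight, whereas under stationarity this count is $\mathrm{HypGeom}(N,|W_n|,N-|W_n|)$ and tends to infinity in probability, forcing the TV-distance at the mixing time to be near $1$. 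Your spectral version --- $t_{\mathrm{relax}}\geq c\,|W_n|\,t_\mix^\IP(G_n)$ combined with $t_\mix(\eps)\geq (t_{\mathrm{relax}}-1)\log(1/(2\eps))$ --- is a clean one-line deduction from the same Dirichlet computation, at the cost of invoking the relaxation-time lower bound on mixing; the paper's argument is longer but stays at the level of distributions and exhibits the bottleneck obstruction to mixing directly. Both are valid proofs.
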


\begin{proof} {\bf (i)} Assume the existence of a sequence $W_n$ satisfying~(\ref{e.bbb}). For any vertex $v_n \in W_n$, we have $\deg(v_n)-|W_n| \leq |\partial_E W_n|$, hence there exists some constant $\tilde K<\infty$ such that $\deg(v_n) \leq \tilde K |E_n| / t_\mix^\IP(G_n)$. Fix such a vertex $v=v_n$, and consider the function $f: S_N \lora \R$ defined by $f(\sigma)=\mathbf{1}_{\{\sigma(v)=v\}}$. We will use this $f$ to give an upper bound on the spectral gap via~(\ref{e.gapDir}).

Firstly, $\|f\|^2_2=1/N$. In order to compute $\cE(f)$, notice that $f(\sigma_1)\not=f(\sigma_2)$ for a pair of permutations with $P(\sigma_1,\sigma_2)>0$ if and only if $\sigma_1^{-1}\sigma_2$ is a transposition given by an edge emanating from $v$, and one of the $\sigma_i$'s fixes $v$. Therefore, $\cE(f)=\frac{\deg(v)}{|E_n|}\frac{1}{N}$. Altogether,~(\ref{e.gapDir}) gives that
$$
t_{\mathrm{relax}}^\IP(G_n) \geq \frac{|E_n|}{\deg(v)} \geq \frac{t_\mix^\IP(G_n)}{\tilde K}\,,
$$
and hence the product condition~(\ref{e.prodcond}) indeed fails.

{\bf (ii)} Assume that there does exist such a sequence of subsets $W_n$. In $t_{\mathrm{mix}}(G_n)$ steps, the expected number of transpositions using edges in $\p_E W_n$ is $t_\mix^\IP(G_n) |\p_E W_n|/|E_n|=O(1)$. Therefore, the number of particles in the complement $W_n^c$ at time $t_\mix^\IP(G_n)$ that started in $W_n$ remains tight.

On the other hand, in the stationary distribution, the number of particles in $W_n^c$ that started in $W_n$ has a $\mathrm{HypGeom}(N,|W_n|,N-|W_n|)$ distribution, with mean $(N-|W_n|)|W_n|/N \geq |W_n|/2 \gg 1$ and standard deviation $\sim (N-|W_n|)|W_n|/N^{3/2}$ (see~(\ref{e.HypGeom})). Thus, by Chebyshev's inequality, this random number goes to infinity in probability. Comparing with the previous tightness, we get that the distribution at time $t_{\mathrm{mix}}(G_n)$ has total variation distance close to 1 from stationarity, contradicting the definition of mixing time. 
\qqed
\end{proof}

\section{Reduction to the Bernoulli-Laplace chain}\label{s.redu}

We will always think of the $\frac12$-laziness of the interchange process as first attempting a transposition, then actually doing it only with probability $1/2$. Hence, by time $t$, there are $t$ {\bf attempted transpositions}.

\subsection{Mixing within the cliques}\label{ss.within}

As mentioned in the Introduction, the discrete time interchange process on the complete graph $K_k$, with $1/2$ laziness, has total variation mixing time $t_\mix^\IP(K_k)=\zar{1+o(1)} k \log k$ \cite{diaconis,BSZ,BeSe}. A well-known general fact \cite[Eq.~(4.33)]{LPW}, valid in any Markov chain, is that after the mixing time, the total variation distance from stationarity is decaying exponentially fast on the scale of the mixing time. In our case, there exists a universal constant $c>0$ such that, for all $L>L_0=1$,
\begin{equation}\label{e.expfast}
d_{K_k}^\IP(L\,k\log k) < \exp(-c L)\,.
\end{equation}
We will not  use the sharp result that $L_0=1$ works here; it is much easier to prove the result for $L_0=4$ \cite[Corollary 8.10]{LPW}, and this will actually suffice for us. (This will be important when we say that our arguments give a new proof of $t_\mix^\IP(K_k)=\zar{1+o(1)} k \log k$ itself.)

We will apply the bound~(\ref{e.expfast}) to the interchange process over $G_{n,m}$ and $\Gnm$ restricted to the cliques $\K_1$ and $\K_2$. By this restriction we mean that we simply ignore the effect of the bridge transpositions (they are considered as lazy no-moves).

\begin{propo}\label{quickmix}
Consider the lazy interchange process over $G_{n,m}$ or $\Gnm$, restricted to the cliques, as defined above. Fix $\eps>0$, and let $t=\frac{n^{2}}{m^{1-\eps}}$. Then, at time $t$, the total variation distance of the process from having independent uniform permutations in both cliques is at most $\tilde C \exp\zar{ -\tilde c\, \frac{m^{\eps}}{\log m} }$, with some absolute constants $0<\tilde c,\tilde C < \infty$.

When $m$ is a constant, then, at time $t=n^{1+\eps}$, the total variation distance of the permutation in $\K_1$ from the uniform distribution is at most $\tilde C \exp\zar{ -\tilde c\, \frac{n^{\eps}}{\log n} }$, with some constants $0<\tilde c,\tilde C < \infty$ that may depend only on $m$.
\end{propo}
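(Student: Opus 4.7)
The plan is to observe that, conditioning on how many of the $t$ attempted transpositions fall inside each clique, the restricted process decouples into two conditionally independent $\tfrac12$-lazy interchange processes on $K_n$ and $K_m$, and then to invoke the exponential decay estimate~\eref{e.expfast} separately in each clique.

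First, in either $G_{n,m}$ or $\Gnm$, each attempted transposition is, independently of the others, an edge inside $\K_1$ with probability $p_1:=\binom{n}{2}/|E|$, an edge inside $\K_2$ with probability $p_2:=\binom{m}{2}/|E|$, or a bridge edge otherwise (the symmetrization in $\Gnm$ redistributes the bridge mass among the $nm$ thin bridges but preserves $p_1$ and $p_2$). Let $N_j$ denote the number of attempted transpositions of type $j\in\{1,2\}$ up to time $t$, so $(N_1,N_2)$ is multinomial. Since within-clique swaps in $\K_1$ and $\K_2$ act on disjoint vertex sets, the current permutation restricted to $\K_j$ is, conditionally on $(N_1,N_2)$, distributed as a $\tfrac12$-lazy interchange process on $K_{|\K_j|}$ after $N_j$ attempted transpositions, and the two restrictions are conditionally independent.

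Next, I would apply Chernoff's inequality to concentrate $N_1$ and $N_2$. Since $|E|\sim(n^2+m^2)/2$ and $m\leq n$, we have $p_1\geq 1/2-o(1)$ and $\E[N_2]=tp_2\geq (1-o(1))\,m^{1+\eps}/2$, so the event $\{N_1\geq t/4\}\cap\{N_2\geq m^{1+\eps}/4\}$ has complementary probability at most $\exp(-\Omega(m^{1+\eps}))$, which is negligible compared to the target bound. On this good event, $N_2/(m\log m)\geq m^{\eps}/(4\log m)$; and since $x\mapsto x/\log x$ is increasing and $n\geq m$, we also have $N_1/(n\log n)\geq n/(4m^{1-\eps}\log n)\geq m^{\eps}/(4\log m)$. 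Thus~\eref{e.expfast}, applied separately to $K_n$ and $K_m$, bounds the conditional TV distance of the restriction to $\K_j$ from its uniform law by $\tilde C\exp(-\tilde c\,m^{\eps}/\log m)$ for each $j$. By conditional independence, the joint TV distance from the product of the two uniform laws is at most the sum of these, and averaging over $(N_1,N_2)$ and adding the Chernoff failure probability yields the stated bound, up to an adjustment of $\tilde C$.

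For the case of bounded $m$ only the mixing in $\K_1$ is at stake. At $t=n^{1+\eps}$ we have $p_1\to 1$, so $N_1$ concentrates around $n^{1+\eps}$, giving $N_1/(n\log n)\sim n^{\eps}/\log n$, and~\eref{e.expfast} on $K_n$ yields the TV bound $\tilde C\exp(-\tilde c\,n^{\eps}/\log n)$. There is no real obstacle here; the argument is a clean marriage of~\eref{e.expfast} with standard binomial concentration, the only point to check being that the Chernoff failure probability is dominated by the target, which is automatic since $m^{1+\eps}\gg m^{\eps}/\log m$ and $n^{1+\eps}\gg n^{\eps}/\log n$.
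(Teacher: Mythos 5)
Your proposal is correct and follows essentially the same route as the paper: condition on the (multinomial) counts of attempted transpositions in each clique, note the conditional independence of the two restricted chains, concentrate the counts by a Chernoff bound, and apply the exponential-decay estimate~\eref{e.expfast} in each clique, summing the two conditional TV distances. The only cosmetic difference is that you lower-bound the $\K_1$ term by $m^{\eps}/\log m$ directly, whereas the paper keeps it as $n^{\eps}/\log n$ and lets the $\K_2$ term dominate; both give the stated bound.
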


\proof
The number of attempted transpositions  within $\K_1$ has a binomial distribution: $T_1 \stackrel{d}{=} \Binom\zar{t,\zar{1-\frac{1}{|E_n|}} \frac{\binom{n}{2}}{\binom{n}{2} + \binom{m}{2}} }$. 
This has expectation at least $c_1 n^{1+\eps}$, so, by a standard large deviations estimate, the probability that it is less than $\frac{c_1}{2} n^{1+\eps}$ is at most $\exp(-\tilde c_1 n^{1+\eps})$. If the number of these attempted transpositions is indeed at least $\frac{c_1}{2} n^{1+\eps}$, then, by~(\ref{e.expfast}), the distribution of the resulting permutation within $\K_1$ has total variation distance less than $\exp(-\frac{c\, c_1}{2} \,n^{\eps} / \log n)$ from uniform.

The number of attempted transpositions within $\K_2$ is $T_2 \stackrel{d}{=} \Binom\zar{t,\zar{1-\frac{1}{|E_n|}} \frac{\binom{m}{2}}{\binom{n}{2} + \binom{m}{2}} }$. This has expectation at least $c_2 m^{1+\eps}$, and is less than $\frac{c_2}{2} m^{1+\eps}$ with probability $<\exp(-\tilde c_2 m^{1+\eps})$. If it is not this small, then, by~(\ref{e.expfast}), the distribution of the resulting permutation within $\K_2$ has total variation distance less than $\exp(-\frac{c\, c_2}{2} \,m^{\eps} / \log m)$ from uniform.

The attempted transpositions within $\K_1$ and $\K_2$ are independent from each other, except for their numbers ($T_1+T_2$ plus the number of attempted bridge transpositions equals $t$). Thus, if $X_i(t)$ denotes the configuration within $\K_i$ at time $t$, then
\begin{equation}\label{e.product1}
\begin{aligned}
\Pb{X_1(t)=\sigma_1,\, X_2(t)=\sigma_2} 
&=
\EB{\Pb{X_1(t)=\sigma_1,\, X_2(t)=\sigma_2 \md T_1, T_2}}\\
&= \EB{\Pb{X_1(t)=\sigma_1 \md T_1} \, \Pb{X_2(t)=\sigma_2 \md T_2} }\,. 
\end{aligned}
\end{equation}
Let $\pi_i$ denote the stationary distribution of $X_i(t)$, and for notational ease, let us write $f_i(\sigma_i):=\Ps{X_i(t)=\sigma_i \md T_i}$. Then, the total variation distance, conditionally on $(T_1,T_2)$, can be written as
\begin{equation}\label{e.product2}
\begin{aligned}
\sum_{\sigma_1,\sigma_2} \Big| \Ps{X_1(t)=\sigma_1,\, X_2(t)=\sigma_2 \md T_1,T_2} -\pi_1(\sigma_1) \, \pi_2(\sigma_2) \Big| & \\
&\hskip - 2 in =
\sum_{\sigma_1,\sigma_2} \big| f_1(\sigma_1) \, f_2(\sigma_2) - \pi_1(\sigma_1) \, \pi_2(\sigma_2) \big| \\
&\hskip - 2 in \leq
\sum_{\sigma_1,\sigma_2}  \Big\{ \big| f_1(\sigma_1) - \pi_1(\sigma_1) \big| f_2(\sigma_2) + \pi_1(\sigma_1)  \big| f_2(\sigma_2) - \pi_2(\sigma_2) \big| \Big\} \\
&\hskip - 2 in = \sum_{\sigma_1} \big| f_1(\sigma_1) - \pi_1(\sigma_1) \big| +   \sum_{\sigma_2} \big| f_2(\sigma_2) - \pi_2(\sigma_2) \big| \,.
\end{aligned}
\end{equation}
The first term is the TV-distance of $\pi_1$ and $X_1(t)$, conditioned on $T_1$. With probability at least $1-\exp(-\tilde c_1 n^{1+\eps})$, this $T_1$ is such that this TV-distance is at most $\exp(-\frac{c\, c_1}{2} \,n^{\eps} / \log n)$. So, the expectation  over $T_1,T_2$ of the first term is at most $\exp(-\tilde c_1 n^{1+\eps})+\exp(-\frac{c\, c_1}{2} \,n^{\eps} / \log n)$. Similarly, the expectation over $T_1,T_2$ of the second term is at most $\exp(-\tilde c_2 m^{1+\eps}) + \exp(-\frac{c\, c_2}{2} \,m^{\eps} / \log m)$. Altogether,  the total variation distance of the chain from the product of the uniform distributions is at most $\tilde C \exp\zar{ -\tilde c\, \frac{m^{\eps}}{\log m} }$, as desired.

When $m$ is a constant, then the number of transpositions in $\K_1$ by time $t=n^{1+\eps}$ is at least $c_1 n^{1+\eps}$ with probability at least $1-\exp(-\tilde c_1 n^{1+\eps})$, and if this event happens, then the total variation distance from uniform is at most $\exp(- {c\, c_1} n^{\eps} / \log n)$ by \eref{e.expfast}, hence the claim follows.
\qqed

In the interchange process over $\Gnm$, every time a bridge transposition happens, uniform random particles get moved, and they arrive at uniform random places. Moreover, the attempted bridge transpositions are independent of the chain restricted to the cliques, except for the number of steps in the two chains. Using the argument of (\ref{e.product1},\ref{e.product2}), if enough time has passed so that, with high probability, the number of steps within the cliques is beyond the mixing time given by Proposition~\ref{quickmix}, and also the number of bridge transpositions is large enough so that the \emph{number of particles in a given clique} that started from that clique has mixed,  then the full system has mixed. As noted in the Introduction, the latter process is a time-changed Bernoulli--Laplace diffusion model with two urns, one containing $n$ and the other containing $m$ balls.  The mixing time for this process will turn out to be of larger order than the mixing time within the cliques, hence that will be the dominant term.

\subsection{Coupling between the original and the symmetrized process}

We will prove later that $t^\IP_\mix(\Gnm)$ satisfies the bounds (\ref{e.largem}, \ref{e.smallm}) of Theorem~\ref{maintheo}. Note that both cases satisfy $t^\IP_\mix(\Gnm) \asymp n^2 \, m \log m$, which is much larger, for $m(n)\to\infty$, than the time scale $\frac{n^2}{m^{1-\eps}}$ of Proposition~\ref{quickmix}. This makes it possible to prove the following statement.

\begin{propo}\label{p.coupling1}
Assume $m(n)\rightarrow \infty$, and assume that we already know (proved later) that $t^\IP_\mix(\Gnm) \asymp n^2 \, m \log m$, with cutoff. Then $t^\IP_\mix(G_{n,m}) \sim t^\IP_\mix(\Gnm)$, also with cutoff. 
\end{propo}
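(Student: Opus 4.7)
The plan is a step-by-step coupling of the two processes in which the \emph{partition trajectory} (the assignment of each particle to a clique, at every time) agrees exactly, so that controlling the residual within-clique arrangement becomes the only remaining task. At each step, use one source of randomness to decide whether the attempted transposition lies in $\K_1$, in $\K_2$, or across the bridge, and use identical randomness for within-clique moves; for a bridge move, $X^o$ deterministically attempts $(n,n+1)$ while $X^s$ independently picks uniform $i\in\K_1,\,j\in\K_2$ and attempts $(i,j)$. Either operation exchanges one particle from each clique, so the partition trajectories coincide and both equal the Bernoulli--Laplace chain, whose mixing time is $(1\pm o(1))\,t^\IP_\mix(\Gnm)$ by the assumed cutoff (Bernoulli--Laplace mixing drives the mixing of $\Gnm$).

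\emph{Upper bound.} Fix $\eps\in(0,1/2)$, set $\Delta:=n^{2}/\sqrt{m}$, and let $T:=t^\IP_\mix(\Gnm)(\eps/3)+\Delta$. Let $\tilde X^o$ denote the process obtained from $X^o$ by replacing every bridge attempt in $(T-\Delta,T]$ with a lazy no-move. Then
\begin{equation*}
\bigl\|\mathrm{Law}(X^o_T)-\mathrm{Law}(\tilde X^o_T)\bigr\|_{TV}\;\leq\;\P\bigl(\text{some bridge attempt falls in }(T-\Delta,T]\bigr)\;\leq\;\frac{\Delta}{2|E|}\;\sim\;\frac{1}{\sqrt{m}}\;=\;o(1).
\end{equation*}
For $\tilde X^o$, the partition is frozen during $(T-\Delta,T]$, so $\mathrm{part}(\tilde X^o_T)=\mathrm{part}(X^o_{T-\Delta})\stackrel{d}{=}\mathrm{part}(X^s_{T-\Delta})$, within $\eps/3$ in TV of the stationary partition distribution (since $X^s_{T-\Delta}$ is $\eps/3$-close to $\pi$). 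Conditionally on this partition, the internal arrangement of $\tilde X^o_T$ is the output of a pure interchange process on $K_{|\K_1|}\sqcup K_{|\K_2|}$ run for $\Delta$ steps from an arbitrary start; Proposition~\ref{quickmix} applied with $\eps=1/2$ bounds the conditional TV distance from uniform by $\tilde C\exp(-\tilde c\sqrt{m}/\log m)=o(1)$. The standard partition/arrangement decomposition of TV distance then yields $\|\mathrm{Law}(\tilde X^o_T)-\pi\|_{TV}\leq\eps/3+o(1)$, hence $\|\mathrm{Law}(X^o_T)-\pi\|_{TV}\leq\eps/3+o(1)<\eps$ for $n$ large, so $t^\IP_\mix(G_{n,m})(\eps)\leq T=(1+o(1))\,t^\IP_\mix(\Gnm)$.

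\emph{Lower bound, cutoff, and main obstacle.} The partition is a measurable function of $X^o$, so $t^\IP_\mix(G_{n,m})(1-\eps)\geq t^{\mathrm{BL}}_\mix(1-\eps)=(1-o(1))\,t^\IP_\mix(\Gnm)$ by the assumed cutoff; combined with the upper bound this gives $t^\IP_\mix(G_{n,m})\sim t^\IP_\mix(\Gnm)$, and cutoff for $G_{n,m}$ at the same window follows by ratio comparison. The main technical obstacle is that in the long interval $[0,T-\Delta]$ the original process is governed by the bottleneck edge $(n,n+1)$, and a second-moment estimate on bridge-attempt arrivals shows that $\Theta(m^\eps\log m)$ pairs of bridge events fall within less than the within-clique mixing time of one another, introducing correlations absent in $\Gnm$ that cannot naively be erased. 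The argument above bypasses having to analyze them by demanding within-clique mixing only during the final bridge-event-free $\Delta$-window, which is precisely where the assumption $m(n)\to\infty$ is used: it is exactly what permits the window $n^{2}\log m/m\ll\Delta\ll n^{2}$ to be nonempty.
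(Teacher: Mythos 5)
The proposal founders on its opening claim that under your step-by-step coupling ``the partition trajectories coincide and both equal the Bernoulli--Laplace chain.'' In $G_{n,m}$ a bridge transposition swaps the two \emph{specific} particles currently occupying the bridge vertices $n$ and $n+1$, and their identities are determined by the past within-clique moves; in $\Gnm$ the swapped pair is a fresh uniform choice from each clique. No coupling of the form you describe can make the two partition processes (which particles sit in which clique) agree, and the count of original $\K_1$-particles residing in $\K_1$ under $X^o$ is not even a Markov chain, let alone the Bernoulli--Laplace chain. The cleanest illustration is two bridge events in quick succession: in $G_{n,m}$ the second swap returns exactly the particle that just crossed (unless a within-clique move has displaced it from vertex $n+1$), whereas in $\Gnm$ it picks a uniform particle --- a genuine discrepancy in the partition dynamics, not merely in the arrangement. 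Since your upper bound rests on $\mathrm{part}(\tilde X^o_T)\stackrel{d}{=}\mathrm{part}(X^s_{T-\Delta})$ and your lower bound on the partition of $X^o$ being BL-distributed, both collapse. Your final paragraph correctly names the short-run correlations as the main obstacle, but freezing the last $\Delta$-window only repairs the within-clique \emph{arrangement}; it does nothing about the corruption of the \emph{partition} accumulated over all of $[0,T-\Delta]$.

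What is actually required --- and what the paper's proof supplies --- is: (i) using Proposition~\ref{quickmix} to show that after each ``long run'' (at least $n^2/m^{1-\eps}$ steps between consecutive bridge events, which holds for all but $O(m^{\eps}\log m)$ of the $O(m\log m)$ bridge events) the particle at each bridge vertex is close to uniform, so that the bridge swap in $G_{n,m}$ can be coupled to a uniform swap in $\Gnm$ with small failure probability; (ii) a case analysis for each isolated short run, showing that the block (long run, bridge swap, short run, bridge swap, long run) in $G_{n,m}$ can be imitated in $\Gnm$ up to an $O(1/m)$ failure probability and a bounded time shift; and (iii) union bounds showing that two consecutive short runs whp never occur and that the accumulated time shift is negligible compared with $n^2 m\log m$. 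Without some version of (i)--(iii), the distributional identity on which your argument relies is false, so there is a genuine gap at the heart of the proposal.
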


\proof We will write whp for ``with high probability'', i.e., for a probability tending to 1. Let $X_t$ be the interchange process on $G_{n,m}$, and $\hatpr{t}$ the interchange process on $\Gnm$. We are going to couple $X_t$ to a third process, $\tsproc{t}$, which will just be a time-changed version of $\hatpr{t}$ with a small time shift, while $\Ps{X_t \neq \tsproc{t}}$ will be small for all relevant values of $t$.

Whenever we choose an edge within a clique in $G_{n,m}$, then we can choose the same edge in $\Gnm$ too, since the distribution of choices is the same. Failure of the coupling can only occur if we choose ``the bridge'' in $G_{n,m}$. In this case we have to choose what happens in $\Gnm$, where there are $nm$ tiny bridges. Notice that the time between bridge transpositions is a geometric random variable with mean $\asymp n^2$, hence the probability of choosing a bridge before $\frac{n^2}{m^{1-\eps}}$ is at most $m^{-1+\eps}$. We will call this event, with any $\eps\in(0,1/2)$ fixed, a {\bf short run}, and the complement a {\bf long run}. By Proposition~\ref{quickmix}, after a long run, the order of the particles, conditioned on the identity of the particles in the cliques, is very close to being uniform. Hence the permutation given by a long run, followed by a bridge transposition, can be coupled to be the same in $X_t$ and $\hatpr{t}$, with a small probability $\tilde C \exp\zar{ -\tilde c\, \frac{m^{\eps}}{\log m} }$ of failure. 

What makes life more difficult is that, during the order $n^2\,m\log m$ steps, there are order $m\log m$ bridge transpositions (with probability at least $1-\exp(-c m \log m)$), hence short runs do happen. However, the probability of two short runs right after each other is of order $m^{-2+2\eps}$, hence whp this is not going to happen during our order $m\log m$  runs, so we will be able to ignore this possibility.

We will present three cases of a long run followed by a short run, which is then followed by another long run. In the process $X_t$, in the first long run, whp there is a uniform mixing of particles in both cliques (a permutation denoted by $U_1$). Then we have a bridge transposition $T_1$, then a non-uniform mixing $V_2$ in the cliques by the short run, then another bridge transposition $T_2$, and a final permutation $U_3$ in the cliques that is again close to uniform. From these permutations, we will produce a time-shifted process $\tsproc{t}$ on $\Gnm$.

In the first case, assume that in $G_{n,m}$, the particles on the bridge remain fixed by the permutation $V_2$. Then $T_2$ simply switches back $T_1$, and the final permutation $U_3$ reshuffles the cliques uniformly, as if $U_1,T_1,V_2,T_2$ had never happened. So, the part  $U_1,T_1,V_2,T_2,U_3$ of the process $X_t$ will be coupled to a single $\tilde U_3$ in $\tsproc{t}$. 

In the second case, exactly one of the particles leaves the bridge in $G_{n,m}$ under $V_2$. Assume that after $U_1$, we had particles $(x,y)$ on the bridge (left and right side, respectively), and that the left one leaves under $V_2$. After $V_2$, we have $(x',x)$ on the bridge, where $x'\neq x$ from $\K_1$. After $T_2$, we have $(x,x')$, then the system reshuffles under $U_3$, with some pair $(x'',y'')$ on the bridge at the end. Note that the distribution of $x'$ is uniform among the particles present in $\K_1$ before $U_1$ (it is uniform among particles different from $x$, but $x$ is uniform itself), and $y$ is uniform among the particles of $\K_2$. Thus, the effect of $U_1,T_1,V_2,T_2,U_3$ can be imitated by a sequence $\tilde U_1,\tilde T_1,\tilde U_3$ in $\tsproc{t}$. The case of the right side particle leaving the bridge can be handled similarly.

In the third case, both particles leave the bridge in $G_{n,m}$ under $V_2$. This means that $T_1$ and $T_2$ happen to uniform random elements, except that $T_2$ can choose neither particle from $T_1$. In $\Gnm$, we can simulate this by just two independent uniformly random transpositions. Assume that $(x,y)$ were the particles on the bridge in $G_{n,m}$ before $T_1$. Then the probability that $\Gnm$ chooses $x$ or $y$ to imitate $T_2$ is of order $\frac{1}{m}$. Thus, the effect of $U_1,T_1,V_2,T_2,U_3$ can be imitated by a sequence $\tilde U_1,\tilde T_1,\tilde V_2,\tilde T_2,\tilde U_3$ in $\tsproc{t}$ with probability $1-O\zar{\frac{1}{m}}$.

If there are no two short runs right after each other, then we can do the coupling between $X_t$ and $\tsproc{t}$ going through all the short runs one-by-one from the beginning (possibly using the uniform permutation $U_3$ after a short run as the uniform distribution $U_1$ preceding the next short run).

Define $\O_t$ to be the event that for all time $s\leq t$, the coupling ``is OK'': there are no two consecutive short runs, at every long run the permutations have been mixed sufficiently, and in every short run the ``bad part'' of the third case discussed above, of probability $O(1/m)$, did not occur. What is the probability of $\O_t$, when $t \asymp n^2\,m\log m$? We already know that whp the number of runs is of order $m\log m$, and similarly, the number of short runs is of order $m^{\eps}\log {m}$ (with a failure probability that is exponentially small in $m^\eps$). Let us condition on these events. Then, the probability that there is a long run where the permutations did not mix sufficiently is $O(m\log m \exp(-\tilde c\, m^\eps/\log m))$, which tends to 0. The probability that there are two short runs after each other is $O(m^{-1+2\eps}\log {m})$, which tends to 0 if $\eps< 1/2$. The probability that the bad part of the third case occurs during any of the short runs is at most  $O(m^{\eps-1}\log {m})$. So, altogether, $\Ps{\O_t} \ge 1-O(m^{-1+2\eps}\log {m})$.

Conditioned on $\O_t$, the permutations $X_t$ and $\tsproc{t}$ are the same. Conditioned on the complement $\O_t^c$, their total variation distance  is at most 1. Thus, for any $t \asymp n^2\,m\log m$,
\begin{equation}\label{tsdist}
d_{\text{TV}}\zar{X_t,\tsproc{t}} \leq \Ps{\O_t^c}  =  O(m^{-1+2\eps}\log {m})\,.
\end{equation}

We now give a bound on the order of the time we time-shifted to get $\tsproc{t}$. For each short run, the time shift is at most (from the first case above) the total length of a long run, a short run, and two transpositions. For the $O(m^\eps \log m)$ short runs, the total length is $O(n^2 m^\eps \log m)$ whp. Note that this is of smaller order than the mixing time $n^2 \, m \log m$ for $\Gnm$.

Let us denote $\hat t := t^\IP_\mix(\Gnm)$, and let $\delta\in (0,1)$ arbitrary. By the triangle inequality:
\begin{equation*}
d_{\text{TV}}\zar{X_{(1+\delta)\hat t},\pi} \leq d_{\text{TV}}\zar{X_{(1+\delta)\hat t},\tsproc{(1+\delta)\hat t}}+d_{\text{TV}}\zar{\tsproc{(1+\delta)\hat t},\pi}\,.
\end{equation*}

The first term in the sum is $o(1)$ by~\eref{tsdist}. The second term is $o(1)$ because the time shift from $\tsproc{(1+\delta)\hat t}$ to the process $\hatpr{t}$ is smaller than $\delta \hat t / 2$ whp, hence we can use the smallness of $d_{\text{TV}}\zar{\hatpr{(1+\delta/2)\hat t},\pi}$.

For a lower bound, we use the following:
\begin{equation}\label{e.lower}
\begin{aligned}
d_{\text{TV}}\zar{X_{(1-\delta)\hat t},\pi}&\geq d_{\text{TV}}\zar{X_{(1-\delta)\hat t},\pi \bgiv \O_{(1-\delta)\hat t}} \Ps{\O_{(1-\delta)\hat t}}\\
&=d_{\text{TV}}\zar{\tsproc{(1-\delta)\hat t},\pi \bgiv \O_{(1-\delta)\hat t}} \Ps{\O_{(1-\delta)\hat t}}\,.
\end{aligned}
\end{equation}
To estimate the last expression, notice that
\begin{equation*}
\begin{aligned}
d_{\text{TV}}\zar{\tsproc{(1-\delta)\hat t},\pi}&=d_{\text{TV}}\zar{\tsproc{(1-\delta)\hat t},\pi \bgiv \O_{(1-\delta)\hat t}}\Ps{\O_{(1-\delta)\hat t}}\\
&\qquad+d_{\text{TV}}\zar{\tsproc{(1-\delta)\hat t},\pi \bgiv \O_{(1-\delta)\hat t}^c}\Ps{\O_{(1-\delta)\hat t}^c}\,,
\end{aligned}
\end{equation*}
therefore
\begin{equation*}
\begin{aligned}
d_{\text{TV}}\zar{\tsproc{(1-\delta)\hat t},\pi \bgiv \O_{(1-\delta)\hat t}} {\Ps{\O_{(1-\delta)\hat t}}}
&\geq {d_{\text{TV}}\zar{\tsproc{(1-\delta)\hat t},\pi}-\Ps{\O_{(1-\delta)\hat t}^c}}\\
&\geq {d_{\text{TV}}\zar{\hat{X}_{(1-\delta)\hat t},\pi}-\Ps{\O_{(1-\delta)\hat t}^c}}\\
&={1-o(1)-o(1)}\,,
\end{aligned}
\end{equation*}
where the inequality in the second line used that $\tsproc{t}$ is just a slower version of $\hatpr{t}$. This shows that~\eref{e.lower} is $1-o(1)$, finishing the proof of Proposition~\ref{p.coupling1}.
\qqed

In the case when $m$ remains a constant, the above coupling would not work. So, we will use a ``half-symmetrized'' graph $G'_{n,m}$ instead of $\Gnm$: we replace the bridge edge of $G_{n,m}$ between $n$ and $n+1$ by $n$ small bridges $\big\{(i,n+1),\  i=1,\dots,n\big\}$, each with weight $1/n$.

\begin{propo}\label{p.coupling2}
For $m$ constant, $\eps\in (0,1/2)$ fixed, the interchange process $X_t$ on $G_{n,m}$ can be coupled to the interchange process $X'_t$ on $G'_{n,m}$ such that $\Ps{X_t = X'_t \textrm{ for all } t\leq n^{2+\eps}} = 1-o(1)$ as $n\to\infty$. 
\end{propo}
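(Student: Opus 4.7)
My plan is to adapt the coupling strategy of Proposition~\ref{p.coupling1} to the half-symmetrized setting, taking advantage of two simplifications: only the $\K_1$-side of the bridge is symmetrized, and the target window $n^{2+\eps}$ (with $\eps<1/2$) is much shorter than the mixing time scales handled by Proposition~\ref{p.coupling1}. I will first couple the $\frac12$-laziness coin and the non-bridge edge selections identically in the two chains, which is valid because each non-bridge edge is given probability $1/(2|E|)$ in both $G_{n,m}$ and $G'_{n,m}$. I also synchronize the bridge events through a common coin, so that each step is a bridge attempt in both chains with matching probability $1/(2|E|)\sim n^{-2}$. With these choices in place, a mismatch between $X_t$ and $X'_t$ can only arise from a bridge event.

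Next I plan to control the temporal structure of the bridge events. In a window of length $n^{2+\eps}$, the expected number of bridges is $\Theta(n^{\eps})$, and with high probability the true count is $O(n^{\eps}\log n)$; the inter-bridge gaps are i.i.d.\ geometric with mean $\sim n^2$. A union bound over bridges shows that, with probability $1-O(n^{-1+\eps+\eps'}\log n)=1-o(1)$ for a suitable $\eps'\in(\eps,1/2)$, every gap exceeds $n^{1+\eps'}$. On this high-probability event, Proposition~\ref{quickmix} (in the constant-$m$ regime, applied to $\K_1$) forces the in-clique permutation to be at total variation distance $\tilde C\exp(-\tilde c\,n^{\eps'}/\log n)$ of uniform in both chains at the instant just before every bridge event.

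The heart of the argument is the coupling at each bridge event. Inductively assuming $X_s=X'_s$ for all $s$ before the current bridge, and exploiting the near-uniform in-clique permutation, I couple the two bridge moves through a single uniformly chosen $\K_1$-particle $p$ to be swapped with the occupant of vertex $n+1$: in $G_{n,m}$, $p$ is realized as the particle at vertex $n$, while in $G'_{n,m}$, $p$ is realized as the particle at the uniformly random vertex $I\in\K_1$. Up to the small TV-error to uniform, these two pushforwards are identical distributions on $\K_1$'s current population, so a maximal coupling matches the two realizations of $p$ and makes the two bridge transpositions produce the same new state, with a per-bridge failure probability of $\tilde C\exp(-\tilde c\,n^{\eps'}/\log n)$. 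Summing over the $O(n^{\eps}\log n)$ bridges and adding the rare short-gap event gives a total coupling failure of $o(1)$, as claimed. The delicate point---and the step I expect to be the main obstacle---is verifying that this state-dependent choice of $I$ in $G'_{n,m}$ preserves the uniform marginal on $\K_1$ exactly; this rests on the distributional equivalence of the two bridge rules under uniform in-clique permutations, and the deviation from this equivalence is controlled by the same super-polynomially small TV-error that dominates the per-bridge failure.
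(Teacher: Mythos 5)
Your overall architecture coincides with the paper's: synchronize the bridge-attempt times through a common coin, show by a union bound that with high probability all $O(n^{\eps})$ inter-bridge runs in the window $[0,n^{2+\eps}]$ are longer than $n^{1+\eps'}$, invoke the constant-$m$ part of Proposition~\ref{quickmix} to get near-uniformity of the $\K_1$-permutation before each bridge, and union-bound the per-bridge failures. However, the bridge-coupling step — the one you yourself flag as delicate — does not work as written, for two reasons.

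First, having coupled all non-bridge edge selections \emph{identically}, the two chains occupy the identical state at the instant of a bridge attempt. Conditionally on that common history, ``the particle at vertex $n$'' is a deterministic particle, while ``the particle at vertex $I$'' is uniform over the $n$ current occupants of $\K_1$; a maximal coupling of a point mass with a uniform law succeeds only with probability $1/n$. The near-uniformity from Proposition~\ref{quickmix} concerns the \emph{unconditional} law of the permutation at the end of the run; to exploit it you must leave the in-clique randomness of the run preceding the bridge uncoupled between the two chains, and couple the two near-uniform permutations as wholes (which is what the paper's one-line argument does). Second, even after arranging that the same particle crosses into $\K_2$, the two post-bridge states cannot coincide: the returning particle (the old occupant of $n+1$) lands at position $n$ in $G_{n,m}$ but at the random position $I$ in $G'_{n,m}$, and since the pre-bridge states can differ only by a permutation preserving the cliques, no coupling achieves equality immediately after the bridge when $I\neq n$. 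The discrepancy is erased only during the \emph{next} long run, by re-coupling the re-uniformized $\K_1$-permutations. In particular, literal equality at every single time step is unattainable: the one-step transition laws differ by $\asymp n^{-2}$ in total variation, so under any coupling $\P\left(X_t=X'_t\ \forall t\le n^{2+\eps}\right)\le \exp\left(-cn^{\eps}\right)$. What the argument actually delivers — and all that Subsection~\ref{ss.small} needs — is $X_t=X'_t$ with high probability at the ends of long runs, hence $d_{\mathrm{TV}}(X_t,X'_t)=o(1)$ for the relevant times $t\asymp n^2$. Your write-up should restructure the coupling along these lines rather than claim that the two bridge moves ``produce the same new state.''
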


\proof In time $n^{2+\eps}$, the number of bridge transpositions is of order $n^\eps$ whp, and the probability that any of the runs between them has length less than $n^{1+\eps}$ is at most $O(n^\eps n^{1+\eps} / n^2 )$, which tends to 0. Condition on having order $n^\eps$ runs, and on all of them being at least of length $n^{1+\eps}$. By the second part of Proposition~\ref{quickmix}, at the end of each run, the permutation in $\K_1$ can be considered to be uniform whp, so can be coupled to the process $X'_t$. The coupling fails with conditional probability $O(n^\eps \exp(-\tilde c\, n^\eps / \log n))=o(1)$. Altogether, the coupling fails with probability $O(n^{2\eps-1})=o(1)$, and we are done.
\qqed

Given Propositions~\ref{p.coupling1} and~\ref{p.coupling2}, it is now enough to find $t^\IP_\mix(\Gnm)$ for $m(n)\to\infty$, and $t^\IP_\mix(G'_{n,m})$ for $m\asymp 1$, and the statements of Theorem~\ref{maintheo} will follow.

\section{Mixing in the symmetrized chains}\label{s.symm}

Following the paragraph after Proposition~\ref{quickmix}, we will focus on how the number of particles that started in $\K_1$ and are presently in $\K_1$ evolves.

\subsection{The lower bound in the case $c\sqrt{n}\leq m \leq n$}\label{ss.large1}

Let us now assume that $m=\Omega\zar{\sqrt{n}}$, but $m\leq n$. In this case we can construct a lower bound for the total variation mixing time in the following way. Let $p$ be a particle and 
$$
\indic{p}{t}=\ind{p\in \K_2 \:\text{at time}\: t};\qquad L^t:=\sum_{p\:\text{started in}\: \K_1}\indic{p}{t}\,.
$$ 

We need to find a time $t$, as large as possible, for which the number $L^t$ of particles that started in $\K_1$ and are now in $\K_2$ is still different from the typical number $L^\infty$ in the stationary distribution. We want to apply Chebyshev's inequality, hence we need the expectation and the variance of $L^t$. For these, we have
\begin{equation}\label{vareqmain}
\begin{aligned}
\Eb{L^t} &= \sum_{p\:\text{started in}\: \K_1} \Eb{\indic{p}{t}},\\
\Varb{L^t}
&=\sum_{\substack{p\neq q\\p,q\:\text{started in}\:\K_1}}\mathrm{Cov}\zar{\indic{p}{t},\indic{q}{t}}+\sum_{p\:\text{started in}\:\K_1}\VarB{\indic{p}{t}}.
\end{aligned}
\end{equation}
The expectation and variance of $\indic{p}{t}$ can be calculated using the eigenvalues and eigenvectors of the single-particle chain of Figure~\ref{f.M2}: we write $\pi(x)P^t(x,y)=(\1_x,P^t\1_y)_\pi$, where $\pi$ is the stationary distribution of the chain, then decompose $\1_x$ and $\1_y$ in the basis of eigenvectors, and apply $P^t$, to get:
\begin{equation*}
\begin{aligned}
\Eb{\indic{p}{t}}&=\PB{p\in\K_2\:\text{at time}\:t\Bigm| p\:\text{started in}\:\K_1}
=\frac{m}{m+n}-\frac{m}{m+n}\zar{1-\frac{m+n}{2|E|mn}}^t,\\
\Varb{\indic{p}{t}}&=\Eb{\indic{p}{t}}-\Eb{\indic{p}{t}}^2.
\end{aligned}
\end{equation*}
Plugging in $t=t_{n,\lambda}=\frac{|E|nm}{n+m}\zar{\log n- \log \lambda }$, with $0<\lambda=\lambda_n \ll n$, we have
$$
\zar{1-\frac{m+n}{2|E|mn}}^t = \frac{\sqrt{\lambda}}{\sqrt{n}} (1+o(1)),
$$
hence
\begin{equation}\label{expeq}
\Eb{\indic{p}{t}}=\frac{m}{m+n}-\frac{\sqrt{\lambda} \, m (1+o(1))}{\sqrt{n}(m+n)},
\end{equation}
and
\begin{equation}\label{vareq}
\Varb{\indic{p}{t}} \leq \frac{m}{m+n}-\frac{m^2}{(m+n)^2} = \frac{mn}{(m+n)^2},
\end{equation}
where the last inequality is due to the monotonicity of $x(1-x)$ on $x\in (0,1/2)$.


Regarding the covariances in (\ref{vareqmain}), we have
\begin{equation*}
\begin{aligned}
\mathrm{Cov}\zar{\indic{p}{t},\indic{q}{t}}&=\PB{p,q\in\K_2\:\text{at time}\:t\Bigm|p,q\:\text{started in}\:\K_1}
-\PB{p\in\K_2\:\text{at time}\:t\Bigm| p\:\text{started in}\:\K_1}^2.
\end{aligned}
\end{equation*}
The first probability is independent of $p$ and $q$ (for $p\neq q$). In order to calculate it, we need the Markov chain of \emph{pairs} of particles, with three states: both particles are in $\K_1$; they are in different cliques; both are in $\K_2$. This chain is described by the following transition matrix:
$$ M=\begin{pmatrix} 1-\frac{1}{|E|n}&\frac{1}{|E|n}&0\\
					 \frac{n-1}{2|E|mn}&1-\frac{m+n-2}{2|E|mn}&\frac{m-1}{2|E|mn}\\
					 0&\frac{1}{|E|m}&1-\frac{1}{|E|m}
					 \end{pmatrix}.$$
This matrix has eigenvalues $1$, $1 - \frac{m + n-1}{|E| m n}$, and $1-\frac{m + n}{2 |E| m n}$, with right eigenvectors $(1,\, 1,\, 1)$, $\zar{\frac{m(m-1)}{n(n-1)}, \, -\frac{m-1}{n}, 1}$, and $\zar{-\frac{m}{n}, \, \frac{n - m}{2 n}, \, 1}$, which form an orthonormal basis w.r.t.~the stationary distribution $\zar{{n\choose 2}, \, nm, \, {m\choose 2}}/{n+m \choose 2}$. With the same method as above, we get the following:
\begin{equation}\label{e.covf}
\begin{aligned}
\mathrm{Cov}\zar{\indic{p}{t},\indic{q}{t}}&= \frac{m(m-1)}{(m+n)(m+n-1)(m+n-2)}\Bigg( m+n-2+\\
&\qquad+(m+n)\zar{1-\frac{m+n-1}{|E|mn}}^t
- 2(m+n-1)\zar{1-\frac{m+n}{2|E|mn}}^t\Bigg)\\
&\quad-\zar{\frac{m}{m+n}-\frac{m}{m+n}\zar{1-\frac{m+n}{2|E|mn}}^t}^2.
\end{aligned}
\end{equation}

We will plug in $t=t_{n,\lambda}=\frac{|E|mn}{n+m}\zar{\log n -\log\lambda}$ again. 
To keep track of lower order terms, we will use the following lemma:

\begin{lemma}
If $0\leq a,b$ with $a+b\leq 1$, and $t\ge 1$, then $(a+b)^t \leq a^t + tb$.
\end{lemma}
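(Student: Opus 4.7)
The plan is to reduce this to a one-variable calculus fact about $x \mapsto x^t$ on $[0,1]$. Set $\varphi(s) := (a+s)^t - a^t - ts$, viewed as a function of $s \in [0, 1-a]$. Clearly $\varphi(0)=0$, so it suffices to show that $\varphi$ is non-increasing on this interval and then evaluate at $s=b$.

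Differentiating gives $\varphi'(s) = t(a+s)^{t-1} - t = t\bigl((a+s)^{t-1}-1\bigr)$. Since $a+s \leq 1$ and $t-1 \geq 0$, we have $(a+s)^{t-1} \leq 1$, so $\varphi'(s) \leq 0$ on $[0,1-a]$. Hence $\varphi(b) \leq \varphi(0) = 0$, which rearranges to $(a+b)^t \leq a^t + tb$, as required.

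Equivalently (and perhaps more cleanly for the write-up), one can apply the mean value theorem directly to $f(x)=x^t$ on $[a,a+b]$: there exists $\xi \in [a,a+b]$ with $(a+b)^t - a^t = t\xi^{t-1} b$, and since $\xi \leq a+b \leq 1$ and $t-1 \geq 0$, we have $\xi^{t-1} \leq 1$, yielding the claim. The only mild points to watch are the degenerate cases $b=0$ (trivial) and $a=0$ with $t>1$ (where $\xi^{t-1}$ is still defined and bounded by $1$), but both are handled by the same inequality. There is no real obstacle here; the lemma is essentially the observation that $x \mapsto x^t$ has derivative at most $t$ on $[0,1]$ for $t\geq 1$.
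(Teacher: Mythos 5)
Your proof is correct, and it is cleaner than the one in the paper. Both arguments are elementary calculus, but the paper fixes $b$, first verifies the inequality at the two extremes $a=0$ and $a=1-b$ by a sign-of-derivative argument in $b$, and then handles the intermediate values of $a$ by a second differentiation in $a$; it also records a separate probabilistic interpretation valid only for integer $t$. You instead fix $a$ and differentiate once in the increment $s$, observing that $\varphi'(s)=t\bigl((a+s)^{t-1}-1\bigr)\leq 0$ on $[0,1-a]$, which settles the claim in one step; the mean value theorem phrasing makes the underlying fact transparent, namely that $x\mapsto x^t$ is $t$-Lipschitz on $[0,1]$ for $t\geq 1$. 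Your handling of the degenerate cases ($b=0$, and $a=0$ with $t>1$, where $\xi^{t-1}\leq 1$ still holds) is adequate; the only cosmetic point is the convention $0^0=1$ when $t=1$ and $a+s=0$, which does not affect the inequality since the case $t=1$ is trivial. What the paper's probabilistic remark buys is intuition for integer $t$ (the union-bound reading of the right-hand side); what your argument buys is a uniform, shorter proof for all real $t\geq 1$.
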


\proof For positive integer values of $t$, the claim has a simple probabilistic meaning. The LHS is the probability that at least one of two disjoint events (with probabilities $a$ and $b$) occurs all along $t$ independent tries. The RHS is an upper bound on the  probability that the first event happens always or the second event happens at least once. Since we do not see how to extend this argument for non-integer values of $t$, here is an analytic proof.

Given $b$, we first check the claim at the extremes of $a$, namely, $a=0$ and $a=1-b$. In both cases, the claim is obvious at $b=0$ and $b=1$, while the derivative in $b$ of the difference between the two sides has a fixed sign for $b\in (0,1)$, hence the claim also holds for these intermediate values of $b$. Next, we check the statement for the intermediate values $a\in (0,1-b)$. Now the derivative in $a$ of the difference between the two sides has a fixed sign, hence the claim follows.
\qqed

Now, the Taylor expansion $\exp(-\eps)=1-\eps+O(\eps^2)$ and the previous lemma imply that
$$
(1-\eps)^t = \exp (-\eps t) + O\zar{t \eps^2},
$$
as $\eps\to 0$ and $t\to\infty$. This gives
\begin{equation*}
\begin{aligned}
\zar{1-\frac{n+m}{2|E|mn}}^{t_{n,\lambda}}
&=\frac{\sqrt{\lambda}}{\sqrt{n}}+O\zar{\frac{\log n}{n^{2}m}} = \frac{\sqrt{\lambda}}{\sqrt{n}}+O\zar{\frac{\log n}{n^{5/2}}},\\
\zar{1-\frac{n+m-1}{|E|mn}}^{t_{n,\lambda}}
&=\frac{\lambda}{n}+O\zar{\frac{\log n}{n^{2}m}} = \frac{\lambda}{n}+O\zar{\frac{\log n}{n^{5/2}}},
\end{aligned}
\end{equation*}
using that $\Omega(\sqrt{n}) \leq m$. We now plug these into (\ref{e.covf}) to get
\begin{equation*}
\begin{aligned}
\mathrm{Cov}\zar{\indic{p}{t},\indic{q}{t}}
&=\frac{m(m-1)}{(m+n)(m+n-1)}\Bigg(1+\zar{1+\frac{2}{m+n-2} }\frac{\lambda}{n}
- 2\zar{1+\frac{1}{m+n-2} }\frac{\sqrt{\lambda}}{\sqrt{n}}+O\zar{\frac{\log n}{n^{5/2}}}\Bigg)\\
&\qquad-\frac{m^2}{(m+n)^2}\zar{1-\frac{\sqrt{\lambda}}{\sqrt{n}}+O\zar{\frac{\log n}{n^{5/2}}} }^2\\
&=\frac{m}{(m+n)^2}\Bigg\{ (m-1)\zar{1+\frac{1}{m+n-1} }\zar{1-2\frac{\sqrt{\lambda}}{\sqrt{n}}+ \frac{\lambda}{n} + O\zar{\frac{\sqrt{\lambda}}{n^{3/2}}} }\\
&\qquad\qquad\qquad
-m\zar{ 1-2\frac{\sqrt{\lambda}}{\sqrt{n}}+ \frac{\lambda}{n} + O\zar{\frac{\log n}{n^{5/2}}}}  \Bigg\}\\
&=\frac{m}{(m+n)^2}\Bigg\{-1+O\zar{ \frac{\sqrt{\lambda}}{\sqrt{n}} } \Bigg\}.
\end{aligned}
\end{equation*}
This and~(\ref{vareq}) together give
\begin{equation}\label{vareq2}
\begin{aligned}
\Varb{L^t} &= n(n-1) \, \mathrm{Cov}\zar{\indic{p}{t},\indic{q}{t}} + n \, \Varb{\indic{p}{t},\indic{q}{t}}\\
&\leq \frac{n m}{(m+n)^2} \Bigg\{(n-1)\zar{-1+O\zar{ \frac{\sqrt{\lambda}}{\sqrt{n}} } }+ n \Bigg\}\\
 &= \frac{n m}{(m+n)^2} O\zar{\sqrt{\lambda}\sqrt{n}}.
\end{aligned}
\end{equation}
On the other hand, (\ref{expeq}) gives us
\begin{equation}\label{expeq2}
\Eb{L^t} - \Eb{L^\infty} =  \frac{\sqrt{\lambda} \sqrt{n} \, m}{n+m} (1+o(1)).
\end{equation}

The key point is that the difference (\ref{expeq2}) is of larger order than the standard deviation of $L^t$ given by~(\ref{vareq2}), and also than the standard deviation of $L^\infty$ given by~(\ref{e.HypGeom}), if 
\begin{equation*}
\lambda=\lambda_n \gg \frac{n}{m^2},
\end{equation*}
which is satisfied for any $\lambda_n\to \infty$, since $m\ge c\sqrt{n}$. Thus, the difference between $L^t$ and $L^\infty$ should be possible to detect with high probability. 

More precisely, fix any sequence $\lambda_n\to \infty$ such that $\log\lambda_n \ll \log n$, so that we get a good lower bound for the cutoff. Then, (\ref{expeq2}) and Chebyshev's inequality with~(\ref{vareq2}) yield
\begin{equation}\label{e.LtCheb}
\begin{aligned}
\PB{L^t \geq \frac{mn}{m+n}-\frac{\sqrt{\lambda}}{2} \frac{m\sqrt{n}}{m+n}} &= 
\PB{L^t - \Es{L^t} \geq \frac{\sqrt{\lambda}}{2+o(1)}\frac{m\sqrt{n}}{m+n}}\\
&\leq\frac{\Varb{L^t} (4+o(1)) (m+n)^2}{  \lambda m^2 n} = O\zar{\frac{\sqrt{n}}{\sqrt{\lambda} m}},
\end{aligned}
\end{equation}
which goes to $0$ because $m=\Omega\zar{\sqrt{n}}$ and $\lambda=\lambda_n\to\infty$. Furthermore, Chebyshev's inequality with~(\ref{e.HypGeom}) yields
\begin{equation}\label{e.LinftyCheb}
\begin{aligned}
\PB{L^\infty \leq \frac{mn}{m+n}-\frac{\sqrt{\lambda}}{2} \frac{m\sqrt{n}}{m+n}} &= 
\PB{L^\infty - \Es{L^\infty} \leq -\frac{\sqrt{\lambda}}{2}\frac{m\sqrt{n}}{m+n}}\\
&\leq\frac{\Varb{L^\infty} 4 (m+n)^2}{  \lambda m^2 n} = O\zar{\frac{1}{\lambda}},
\end{aligned}
\end{equation}
which goes to 0 again. Comparing~(\ref{e.LtCheb}) and~(\ref{e.LinftyCheb}) shows that $L^t$ and $L^\infty$ are asymptotically singular as $n\to\infty$. This finishes the proof of the lower bound in~(\ref{e.largem}).

\subsection{The upper bound in the case $c\sqrt{n}\leq m \leq n$}\label{ss.large2}

For the upper bound in~(\ref{e.largem}), we are going to use Proposition~\ref{quickmix}, which says it is sufficient to prove mixing for the Bernoulli--Laplace diffusion in order to see mixing for the interchange process on $\Gnm$. We are going to present a coupling argument for the upper bound on the mixing time of the Bernoulli--Laplace model.

We define the coupling on the number of starting particles in $\K_1$ that reside in $\K_1$ at time $t$, for two such configurations. (Here we note that this number is between $n-m$ and $n$, since the $n$ particles of $\K_1$ do not fit into $\K_2$ if $m<n$).

Let us assume that the number of original particles in $\K_1$ is $k$. Then we have:
\begin{equation}\label{e.jump}
\begin{aligned}
&\PB{\text{jump to}\: k+1}=\frac{(n-k)^2}{2|E|nm},\qquad\qquad \PB{\text{jump to}\: k-1}=\frac{k(m-n+k)}{2|E|nm},\\
&\PB{\text{we remain at}\: k}=1-\frac{(n-k)^2}{2|E|nm}-\frac{k(m-n+k)}{2|E|nm}.
\end{aligned}
\end{equation}
Let the coupled chains be $X_t$ and $Y_t$. We couple them in the following way. Assuming $X_t=x_t$, $Y_t=y_t$, toss a fair coin to decide whether to attempt to move $X_t$. If it is heads, let $X_{t+1}$ be given by~(\ref{e.jump}), with $k=x_t$. If it is tails, move $Y_t$ with the analogous probabilities (simply replacing $x_t$ by $y_t$). Assuming $X_0=x$, $Y_0=y$, with $x\geq y$, we define $D_t=X_t-Y_t$, and then, for the jump probabilities of $D_t$, we have
\begin{equation}\label{e.Djump}
\begin{aligned}
\PB{D_{t+1}-D_t=1} &= \frac{(n-x_t)^2}{2|E|nm}+\frac{y_t(m-n+y_t)}{2|E|nm} \\
\PB{D_{t+1}-D_t= -1} &= \frac{(n-y_t)^2}{2|E|nm}+\frac{x_t(m-n+x_t)}{2|E|nm} \\
\PB{D_{t+1}-D_t=0} &= 1 - \frac{(n-x_t)^2}{2|E|nm} - \frac{y_t(m-n+y_t)}{2|E|nm} - \frac{(n-y_t)^2}{2|E|nm}-\frac{x_t(m-n+x_t)}{2|E|nm}.
\end{aligned}
\end{equation}
Thus, for the expectation of the jump:
\begin{equation}\label{e.drift}
\begin{aligned}
\Eb{D_{t+1}-D_t \md X_t=x_t,Y_t=y_t}=-\frac{1}{2|E|nm}(x_t-y_t)(n+m)=\frac{-D_t(n+m)}{2|E|nm}.
\end{aligned}
\end{equation}
Iterating this, starting with $0 \leq x - y \leq m$,
\begin{equation}\label{couplingexp}
\begin{aligned}
\Ess{x,y}{D_t}\leq\zar{1-\frac{n+m}{2|E|nm}}^tm\sim m\exp\zar{-t\frac{n+m}{2|E|nm}}.
\end{aligned}
\end{equation}
This expectation gets close to 0 only for some $t$ that is not good enough for the bound~(\ref{e.largem}) that we are aiming it. It will nevertheless be useful in the forthcoming argument, which we designed after the treatment of the Ehrenfest urn model in \cite[Theorem 18.3]{LPW}.

The process $D_t$ is somewhat similar to a random walk on the integers, except that it has a drift and laziness that depends not only on the current location $D_t$, but even on the states $\left(X_t,Y_t\right)$. To simplify this situation, we will couple $D_t$ to a ``symmetrized'' process $S_t$, still driven by the events of $\zar{X_t,Y_t}$, and this $S_t$ to a slower ``copycat process'' $L_t$, which moves the same way but with a fixed (maximal) laziness. 

Given $X_t=x$ and $Y_t=y$, the symmetrized process is defined from~(\ref{e.Djump}) by
\begin{equation*}
\begin{aligned}
\Pb{S_{t+1}=S_t+1}=\Pb{S_{t+1}=S_t-1}&:=\frac{\Ps{D_{t+1}=D_t+1}+\Ps{D_{t+1}=D_t-1}}{2}=:p_{x,y}\,,\\
\Pb{S_{t+1}=0}&:=1 - 2\,p_{x,y}\,.
\end{aligned}
\end{equation*}
Note that the negative drift in~(\ref{e.drift}) shows that for $D_t$ the probability of going left (in the negative direction) is always larger than for $S_t$. Thus we can couple $D_t$ and $S_t$ as follows. If $D_t$ goes left, let $S_t$ go left with probability $\frac{\Ps{S_{t+1}=S_t - 1}}{\Ps{D_{t+1}=D_t-1}}$, right with probability $1-\frac{\Ps{S_{t+1}=S_t - 1}}{\Ps{D_{t+1}=D_t-1}}$. If $D_t$ goes right, let $S_t$ also go right. Hence, the marginal distributions correspond to the original $D_t$ and $S_t$ and we have $D_t\leq S_t$ (if $D_0\leq S_0$). Also note that $S_t$ has the same laziness as $D_t$, given $(X_t,Y_t)$.

Now let the lazy copycat process be the time-homogeneous random walk given by
\begin{equation*}
\begin{aligned}
\Pb{L_{t+1}=L_t\pm 1}=\frac{m}{2|E|n},\qquad\qquad \Pb{L_{t+1}=L_t}=1-\frac{m}{|E|n}.
\end{aligned}
\end{equation*}
Note that the maximal laziness of $S_t$, achieved at $(x,y)=(n,n-m)$, is indeed the laziness of $L_t$ given here. Now the coupling between $S_t$ and $L_t$ is as follows. Let the sequence of non-lazy moves made by $S_t$ be $s_1,s_2,\dots \in \{\pm 1\}$. Let us now assume that $X_t=x$ and $Y_t=y$. Then $S_t$ moves left (or right) with probability $p_{x,y}$. If $S_t$ does move, let $L_t$ move with probability $\frac{m}{|E|n}\cdot\frac{1}{p_{x,y}}$. The direction of the move of $L_t$ is the first  move from the list $\{s_1,s_2,\dots\}$ that have not been used yet for the copycat process (this can be done, since by the coupling there are at least as many moves of $S_t$ as there are of $L_t$). This way, $L_t$ ``moves like a shadow of $S_t$'', just with a smaller speed.

So, if $\tau^D$, $\tau^S$, $\tau^L$ are the times at which $D_t$, $S_t$, $L_t$ reach 0, respectively, then we have 
\begin{equation}\label{e.DSL}
\Pb{\tau^D>u}\leq \Pb{\tau^S>u} \leq \Pb{\tau^L>u},\qquad \text{for all }u>0.
\end{equation}
We will need the following statement:

\begin{propo}
Let $L_t$ be a symmetric random walk with laziness probability $1-\frac{1}{M}$. Let $\tau^L$ be the time when $L_t$ reaches 0. Then there exists $c_1\in\mathbb{R}$ and $u_0\in\mathbb{Z}^+$, such that, for all $u>u_0$,
$$
\Pss{k}{\tau^L > u\, M} \leq \frac{c_1k}{\sqrt{u}}.
$$
\end{propo}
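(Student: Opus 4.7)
The plan is to represent the lazy walk $L_t$ as an ordinary (non-lazy) simple random walk on $\Z$ time-changed by a Binomial clock, and then combine a Chernoff estimate for the clock with the classical $k/\sqrt{n}$ tail for the hitting time of $0$ by simple random walk on $\Z$ started at $k$. Concretely, let $N_t := \#\{s \leq t : L_s \neq L_{s-1}\}$ be the number of non-lazy moves by time $t$. By construction, $N_t \sim \Binom(t,1/M)$ with $\E N_t = t/M$, and if $\tilde L_n$ denotes the underlying non-lazy walk (recording only the moves), then $\tilde L$ is a simple random walk on $\Z$, independent of $N$, and $L_t = \tilde L_{N_t}$. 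Writing $\tilde\tau := \min\{n \geq 0 : \tilde L_n = 0\}$, the event $\{\tau^L > uM\}$ is exactly $\{\tilde\tau > N_{uM}\}$, so
\begin{equation*}
\Pss{k}{\tau^L > uM} = \Pss{k}{\tilde\tau > N_{uM}} \leq \Pb{N_{uM} < u/2} + \Pss{k}{\tilde\tau > u/2}.
\end{equation*}

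For the first term, $N_{uM}$ is a sum of $uM$ independent Bernoulli$(1/M)$ variables with mean $u$, so a standard Chernoff inequality gives $\Pb{N_{uM} < u/2} \leq \exp(-u/8)$, which for $u$ beyond some absolute threshold $u_0$ is dominated by $k/\sqrt{u}$ (since $k\geq 1$). For the second term, I will invoke the classical bound $\Pss{k}{\tilde\tau > n} \leq c_0\, k/\sqrt{n}$ for simple random walk on $\Z$, valid for all $n \geq 1$. Using the reflection principle one has $\Pss{k}{\tilde\tau > n} = \Pss{0}{\max_{0 \leq i \leq n} \tilde L_i < k}$, which after another reflection and a local central limit estimate $\Pss{0}{\tilde L_n = j} \leq c/\sqrt{n}$ yields the desired $O(k/\sqrt{n})$ bound; alternatively, the ballot-problem formula $\Pss{k}{\tilde\tau = n}\asymp k/n^{3/2}$ for $n\geq k^2$ sums to the same tail. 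Applying this at $n = u/2$ gives $\Pss{k}{\tilde\tau > u/2} \leq c_0' k/\sqrt{u}$.

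Adding the two contributions and absorbing the exponentially small Chernoff term into the polynomial one for $u > u_0$, we obtain $\Pss{k}{\tau^L > uM} \leq c_1 k/\sqrt{u}$ with an explicit constant $c_1$, as claimed. There is no real obstacle here: the argument is entirely standard, and the only nontrivial input is the classical simple-random-walk tail estimate, which is itself a well-known consequence of the reflection principle and can be cited or sketched in a single line.
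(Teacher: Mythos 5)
Your proof is correct and follows essentially the same route as the paper's: both arguments reduce the lazy walk to the underlying non-lazy simple random walk via the random time change and then invoke the classical $ck/\sqrt{n}$ tail for the hitting time of $0$ (Theorem 2.26 of Levin--Peres--Wilmer, which is the paper's cited input as well). The only difference is how the time change is controlled --- you use a Chernoff bound on the binomial number of moves by time $uM$ together with a union bound, whereas the paper uses the weak law of large numbers for the geometric inter-move times $\tau_u=\xi_1+\cdots+\xi_u$ --- and this is a minor technical variation, with your version being marginally more quantitative.
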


\proof  Let $N_t$ be a simple symmetric random walk. By Theorem 2.26 in \cite{LPW}, we know that if $\tau^N$ is the time it takes $N_t$ to reach 0, then
\begin{equation}\label{e.sqrtu}
\Pss{k}{\tau^N > u} \leq \frac{ck}{\sqrt{u}}.
\end{equation}
We can couple $L_t$ and $N_t$ such that $L_t$ is the lazy copycat version of $N_t$. Then, if $\tau_u$ is the (almost surely finite) time it takes for $L_t$ to move $u$ times, then 
\begin{equation}\label{e.tauNL}
\Pss{k}{\tau^N>u}=\Pss{k}{\tau^L>\tau_u}.
\end{equation}
For the right hand side,
\begin{equation}\label{tauineq}
\begin{aligned}
\Pss{k}{\tau^L>\tau_u}&=\sum_{t=1}^{\infty}\Pss{k}{\tau^L>t}\Pb{\tau_u=t}\geq \sum_{t=\frac{Mu}{2}}^{2Mu}\Pss{k}{\tau^L>k}\Pb{\tau_u=t}\\
&\geq\:\Pss{k}{\tau^L>2Mu} \, \PB{\frac{Mu}{2}\leq\tau_u\leq 2Mu}.
\end{aligned}
\end{equation}
Since $\tau_u=\xi_1+\ldots+\xi_u$, where $\xi_i\sim\mathsf{Geom}\zar{\frac{1}{M}}$, the weak law of large numbers tells us that
$$
\PB{\Bigl\lvert\frac{\xi_1+\ldots+\xi_u}{u}-M\Bigr\rvert>\varepsilon}=\PB{\left\lvert\tau_u-Mu\right\rvert>\varepsilon Mu}\rightarrow 0,\qquad\text{as }u\rightarrow\infty.
$$
Thus, for any $\delta>0$ and $u>u_0$ large enough, we have
$\PB{\frac{Mu}{2}\leq\tau_u\leq 2Mu}>1-\delta$. Plugging this into~\eref{tauineq} and using~\eref{e.sqrtu}, identity~\eref{e.tauNL} gives us 
$$
\Pss{k}{\tau^L>2Mu}\leq \frac{ck}{\sqrt{u}(1-\delta)}.
$$
Taking $c_1=\frac{c}{1-\delta}$, we are done.
\qqed

Now, applying this proposition to our copycat process $L_t$ with $M=\frac{|E|n}{m}$, from~\eref{e.DSL} we get
\begin{equation*}
\begin{aligned}
\Pss{x,y}{\tau^D>s+u|E|\frac{n}{m} \Bgiv D_s}=\Pss{D_s}{\tau^D>u|E|\frac{n}{m}}\leq\Pss{D_s}{\tau^L>u|E|\frac{n}{m}}\leq\frac{c_1D_s}{\sqrt{u}}.
\end{aligned}
\end{equation*}
Then, taking expectation over $D_s$, using~\eref{couplingexp}:
\begin{equation}
\begin{aligned}
\Pss{x,y}{\tau>s+u|E|\frac{n}{m}}\leq\frac{c_1m\exp\zar{-s\frac{n+m}{2|E|nm}}}{\sqrt{u}}.
\end{aligned}
\end{equation}
Hence, we can choose $s=\frac{|E|nm}{n+m}\log n$ and $u=\alpha\frac{m^2}{n}$ to get
\begin{equation*}
\begin{aligned}
\Pss{x,y}{\tau>s+u}\leq \frac{c_1}{\sqrt{\alpha}}.
\end{aligned}
\end{equation*}
This means that, for any $\eps>0$,
\begin{equation}
\begin{aligned}
t_{\mathrm{mix}}(\eps) \leq \frac{|E|nm}{n+m}\log n + O(|E|m)\,, 
\end{aligned}
\end{equation}
end the proof of (\ref{e.largem}) is complete.

\subsection{The case $1\ll m\ll \sqrt{n}$}\label{ss.medium}

We will look at the event that every particle that started in $\K_2$ (let us call these \emph{red} particles) is in $\K_1$ at some time $t$. Note that the probability in the stationary distribution $\pi$ is asymptotic to $\exp\zar{\frac{2m^2}{n}}=1-o(1)$, because $m=o\zar{\sqrt{n}}$.
\medskip

For a lower bound on the mixing time, take $t=2|E|m\log m-\lambda |E| m$. The probability of every red particle being in $\K_1$ at time $t$ can be upper bounded by the probability that every red particle has \emph{at some point} visited $\K_1$. The time $\tau$ it takes for this to happen can be described as follows:
\begin{equation*}
\begin{aligned}
\tau=\sum_{k=0}^{m-1}\tau_k \qquad \text{where} \quad \tau_k\sim \mathrm{Geom}\zar{\frac{m-k}{2|E|m}} \: \text{are independent.}
\end{aligned}
\end{equation*}
Hence we have $\Es{\tau}= (2+o(1))|E|m\log m$, and $\Vars{\tau}\leq 4|E|^2m^2$, and so, by Chebyshev's inequality,
\begin{equation*}
\begin{aligned}
\Pb{\tau < 2|E|m\log m-\lambda |E|m}\leq \frac{4|E|^2m^2}{\lambda^2|E|^2m^2}=\frac{4}{\lambda^2}.
\end{aligned}
\end{equation*}
Hence for any $\delta>0$, if $\lambda>0$ is large enough, then at $t=2|E|m\log m-\lambda|E|m$ we have
\begin{equation*}
\begin{aligned}
\max_{\sigma \in S_{n+m}}\norm{P^t\zar{\sigma,\cdot}-\pi}\geq 1-\delta\,,
\end{aligned}
\end{equation*}
and thus the lower bound of~(\ref{e.smallm}) follows.
\medskip

For the upper bound, let us look at the probability that a red particle $p$ resides in $\K_2$ at time $t$:
$$P^t\zar{p\in\K_2}=\frac{m}{m+n}+(1+o(1))\frac{n}{m+n}\exp\zar{-t\frac{m+n}{2|E|mn}}.$$
Plugging in $t=2|E|m\log m+\lambda|E|m$, we get:
$$P^t\zar{p\in\K_2}=\frac{m}{m+n}+(1+o(1))\frac{n}{m+n}\frac{1}{m^{1+\frac{m}{n}}}\exp\zar{-\frac{\lambda}{2}\zar{1+\frac{m}{n}}}.$$
Since $m=o\zar{\sqrt{n}}$, we can take a union bound for the probability that \emph{any} red particle is in $\K_2$:
\begin{equation*}
\begin{aligned}
P^t\zar{\text{reds}\:\cap \K_2 \neq \emptyset}\leq& \frac{m^2}{m+n}+\frac{mn}{m+n}\frac{1}{m^{1+\frac{m}{n}}}\exp\zar{-\frac{\lambda}{2}\zar{1+\frac{m}{n}}}+o(1)\\
\leq &\exp\zar{-\frac{\lambda}{2}}+o(1).
\end{aligned}
\end{equation*}
That is, the contribution to the total variation distance of $P^t(\sigma,\cdot)$ and the stationary distribution $\pi$ from the part of the probability space where any red particle still resides in $\K_2$ is at most $\exp\zar{-\frac{\lambda}{2}}+o(1)$. On the other hand, on the event that all red particles are in $\K_1$, the total variation distance is small due to the permutations in $\K_1$ having mixed in $O\zar{n\log n}$ time, as explained in Subsection~\ref{ss.within}. Hence, for large $\lambda$, the total variation distance at $t=2|E|m\log m + \lambda|E|m$ is small, and the upper bound of~(\ref{e.smallm}) follows.

\subsection{The case $m\asymp 1$}\label{ss.small}

We are going to prove that the interchange process $X'_t$ over the ``half-symmetrized'' graph $G'_{n,m}$, introduced right before Proposition~\ref{p.coupling2}, when $m$ is fixed, satisfies the total variation distance bounds of~(\ref{e.constm}). By Proposition~\ref{p.coupling2}, this is inherited to the interchange process over $G_{n,m}$, and hence part (iii) of Theorem~\ref{maintheo} will be proved.

%
%
%

As in the previous subsection, we will look at the event $\O$ that all the $m$ red particles starting in $\K_2$ are in $\K_1$. Note that $\O$ has stationary probability $1-o(1)$ as $n\to\infty$. Furthermore, conditioned on $X'_t \in \O$, the red particles are uniformly located in $\K_1$, and the identity of the $m$ non-red particles in $\K_2$ is also uniform, hence 
$$\Ps{X'_t \not\in \O}-o(1)  \leq d_{\mathrm{TV}}(X'_t,\pi) \leq \frac{\Ps{X'_t \not\in \O}}{1-o(1)}\,.$$
So, it is enough to bound the probability $\Ps{X'_t \not\in \O}$. Now the chain is not as symmetric as before, hence exact calculations are not viable, but we still can give good enough bounds.

For each particle $i\in\{n+1,\dots,n+m\}$ started in $\K_2$, let $\tau_i$ be the first time when it enters $\K_1$, and let $\tau:=\max\{\tau_i : n+1 \leq i \leq n+m\}$.  Clearly, $\tau_i$ stochastically dominates a geometric random variable with success probability $\frac{1}{|E_n|}$, since even if the particle is at the bridge vertex $n+1$, we need a bridge transposition to occur. This implies that $\Ps{X'_t\in\O} \leq \Ps{\tau \leq t} < 1-\eps$ holds for all $t<Kn^2$, where $K$ is large if $\eps>0$ is small. This gives the lower bound in~(\ref{e.constm}).

For an upper bound, consider the Markov chain with 3 states on Figure~\ref{f.M3}, a projection of the movement of a single particle.

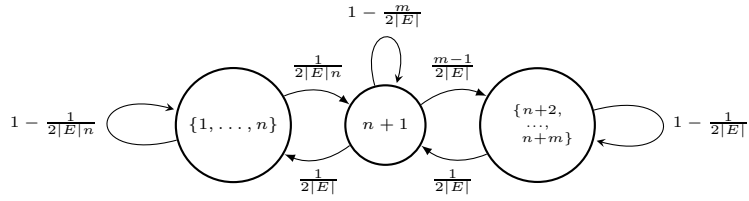
\begin{figure}[h!]
\centering
\begin{tikzpicture}
\node[state] (1) {$\{1,\dots,n\}$};
\node[state] (2) [right of=1] {$n+1$};
\node[state] (3) [right of=2] {$\begin{smallmatrix}\{n+2,\\ \dots,\\ \quad n+m\}\end{smallmatrix}$};
\draw[-latex] (1) to[bend left=30] node[above] {$\frac{1}{2|E|n}$} (2);
\draw[-latex] (2) to[bend left=30] node[below] {$\frac{1}{2|E|}$} (1);
\draw[-latex] (2) to[bend left=30] node[above] {$\frac{m-1}{2|E|}$} (3);
\draw[-latex] (3) to[bend left=30] node[below] {$\frac{1}{2|E|}$} (2);

\path[]
    (1) edge [loop left] node {$1-\frac{1}{2|E|n}$} (1)
    (2) edge [loop above] node {$1-\frac{m}{2|E|}$} (2)
    (3) edge [loop right] node {$1-\frac{1}{2|E|}$} (3);
\end{tikzpicture}
\caption{A projected chain for a single particle in the interchange process over $G'_{n,m}$.}\label{f.M3}
\end{figure}

A standard calculation gives that $\E\tau_i \asymp n^2$ both for $i=n+1$ and for $i\in\{n+2,\dots,n+m\}$. This implies, by Markov's inequality and a union bound, that $\Ps{\tau > Kn^2} < \eps$, if $K$ is large enough. Moreover, the additional time $\gamma_i$ after $\tau_i$ when particle $i$ is first back at $\K_2$ follows a geometric random variable $\gamma_i$ with success probability $\frac{1}{2n|E_n|}$, and hence, for any $t\asymp n^2$, we have $\Ps{t < \min_i \gamma_i} > 1-o(1)$. Altogether, for $t=Kn^2$, with $K$ large enough, we have  
$$\Ps{X'_t \in \O} \geq \Ps{\tau_i < t <\tau_i+\gamma_i \textrm{ for all }i} > 1-2\eps\,,$$
which gives the upper bound in~(\ref{e.constm}), and finishes the proof of Theorem~\ref{maintheo}.
\qqed
%
%

\section{The labelled exclusion process in the complete graph}\label{s.exclu}

First of all, here are the results of Lacoin and Leblond \cite{lale} on the labelled exclusion process, written here for the lazy version:

\begin{theo}[\cite{lale}]\label{t.laleold}
For the $\frac12$-lazy exclusion process on the complete graph with $n$ vertices and $k$ labelled particles, for every $\eps\in (0,1)$ there exists $\beta > 0$ such that, for every $k$ and $n$,
$$
t_\mix(1-\eps) \geq n\log k-\beta n\,.
$$
Moreover, if $\lim_{n\to\infty} k(n)/\sqrt{n}= 0$, then for every  for every $\eps\in (0,1)$ there exists $\beta > 0$ such that, for every $k$ and $n$,
$$
t_\mix(\eps) \leq n\log k+\beta n\,.
$$
\end{theo}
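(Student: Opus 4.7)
My plan is to prove the lower bound (valid for all $k\le n$) via a coupon-collector-style obstruction, and the upper bound (in the regime $k=o(\sqrt n)$) by combining a single-particle computation with a symmetry reduction that removes the need to mix the labelled bijection within $\K^c$.

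\emph{Lower bound.} I would let $v_p$ denote the initial position of particle $p$, set $N_t:=\#\{p:X_t(p)=v_p\}$, and write $N'_t\le N_t$ for the number of particles that have never moved by time $t$. Each particle marginally performs the lazy random walk on $K_n$ with per-step move probability $1/n$ (an incident edge is chosen with probability $2/n$, then the swap occurs with probability $1/2$), so $\P(\tau_p>t)=(1-1/n)^t$, and a similar two-particle count gives $\P(\tau_p>t,\tau_q>t)\sim(1-2/n)^t$. Plugging in $t=n\log k-\beta n$ yields $\E[N'_t]\sim e^\beta$ and $\Var(N'_t)\sim e^\beta$, so Chebyshev gives $\P_{\sigma_0}\!\bigl(N_t\ge\tfrac12 e^\beta\bigr)\ge 1-O(e^{-\beta})$. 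In stationarity $\E_\pi[N]=k/n\le 1$, so by Markov $\P_\pi\!\bigl(N\ge\tfrac12 e^\beta\bigr)\le 2e^{-\beta}$; the event $\{N\ge\tfrac12 e^\beta\}$ separates the distributions and yields $d_{\mathrm{TV}}(X_t,\pi)\ge 1-O(e^{-\beta})$, from which the stated lower bound follows by taking $\beta=\beta(\eps)$ large.

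\emph{Upper bound when $k=o(\sqrt n)$.} Set $\K:=\{v_1,\ldots,v_k\}$ and let $N^{\mathrm{red}}_t$ be the number of particles currently in $\K$. Since each particle marginally performs the lazy RW on $K_n$, whose single non-trivial eigenvalue is $\lambda=1-1/(n-1)$, one has exactly
\[
\P_{v_p}(X_t(p)\in\K)=\frac{k}{n}+\Big(1-\frac{k}{n}\Big)\lambda^t.
\]
At $t=n\log k+\beta n$ this gives $\lambda^t\sim e^{-\beta}/k$, so $\E[N^{\mathrm{red}}_t]=k^2/n+(1-k/n)e^{-\beta}\to e^{-\beta}$ (the $k^2/n$ term vanishes thanks to $k=o(\sqrt n)$), and Markov yields $\P(N^{\mathrm{red}}_t\ge 1)\le e^{-\beta}+o(1)$. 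The decisive step is then a symmetry argument: the chain is invariant under post-composition by any $\rho\in\mathrm{Sym}(\K^c)$, and since $\sigma_0$ maps $[k]$ into $\K$, the starting configuration is a fixed point of this action. Consequently the law of $X_t$ conditioned on $\{X_t([k])\subset\K^c\}$ is $\mathrm{Sym}(\K^c)$-invariant, and because that group acts transitively on injections $[k]\to\K^c$, the conditional law is uniform on such injections --- matching exactly the conditional stationary distribution on the same event. Therefore
\[
d_{\mathrm{TV}}(X_t,\pi)\le \P(N^{\mathrm{red}}_t\ge 1)+\P_\pi(N^{\mathrm{red}}\ge 1)\le e^{-\beta}+k^2/n+o(1),
\]
and choosing $\beta=\beta(\eps)$ large finishes the upper bound.

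\emph{Main obstacle.} The lower bound is a routine second-moment computation; the nontrivial content of the upper bound lies in recognizing that $\mathrm{Sym}(\K^c)$-invariance handles the within-$\K^c$ mixing of the labelled bijection ``for free'', so that only the Bernoulli--Laplace-type task of emptying $\K$ needs direct control --- and that task is already done by the single-particle marginal analysis. The restriction $k=o(\sqrt n)$ enters precisely because only then does $\pi$ itself concentrate on $\{N^{\mathrm{red}}=0\}$, so that matching conditional distributions actually yields closeness to the full $\pi$. For $k=\Omega(\sqrt n)$ the symmetry argument alone no longer suffices, as one must genuinely mix the bijection within $\K$, requiring a different (and more intricate) approach --- this is the regime addressed by Theorem~\ref{t.lalenew}.
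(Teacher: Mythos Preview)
Your proposal is correct. Note, however, that the paper does not give its own proof of this theorem: it is quoted from \cite{lale}, and the paper only remarks that the upper bound ``also follows from our proof in Subsection~\ref{ss.medium}''. So the comparison is really between your argument and the paper's treatment of the analogous dumbbell regime $1\ll m\ll\sqrt n$ in Subsection~\ref{ss.medium}.

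For the lower bound, the paper (in the dumbbell setting) works with the hitting time $\tau=\sum_k\tau_k$ until every red particle has left the small clique, writing it as a sum of independent geometrics and applying Chebyshev to $\tau$ itself. You instead run a second-moment argument directly on the count $N'_t$ of particles that have never moved. These are dual formulations of the same coupon-collector obstruction; your version has the minor advantage of comparing against the stationary statistic $N_t$ on the nose, whereas the hitting-time version needs the extra remark that $\{\tau>t\}\subset\{N_t\ge 1\}$.

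For the upper bound, the paper's Subsection~\ref{ss.medium} uses the union bound to show all particles have left $\K$ with high probability, and then invokes the within-clique mixing of Proposition~\ref{quickmix} to conclude. Your $\mathrm{Sym}(\K^c)$-invariance argument is a cleaner substitute in the $K_n$ setting: because the complete graph has full symmetry and the initial configuration is $\mathrm{Sym}(\K^c)$-fixed, the conditional law on $\{X_t([k])\subset\K^c\}$ is automatically uniform, with no appeal to any auxiliary mixing result. This is the same mechanism the paper exploits implicitly in the proof of Theorem~\ref{t.lalenew} when it says ``moving between the two parts always happens to a uniform random location''. Both approaches give the same bound; yours is more self-contained for the exclusion process specifically, while the paper's route via Proposition~\ref{quickmix} is what is needed in the less symmetric dumbbell setting.
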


\proofof{Theorem~\ref{t.lalenew}} It remains to prove that, for any $c>0$, $\eps>0$, $\delta\in(0,1)$, if $n$ is large enough and $k\geq c\sqrt{n}$, then, at time $T:=(1+\eps)\, n \log k$ we are at TV-distance at most $\delta$ from stationarity. 

Let $\K$ be the subset of vertices where the labelled particles start, let $L_T$ be the number of labelled particles at time $T$ who have never left $\K$, and color them purple. Clearly, as $n\to\infty$,
$$
\Es{L_T} = k \zar{1-\frac{n-k}{2{n \choose 2}}}^T \sim k \exp\zar{-(1+\eps) \frac{n-k}{n} \log k} = \exp\zar{ \zar{\frac{k}{n}-\eps\frac{n-k}{n}} \log k }.
$$

We will first assume that $k \leq n/2$. Then, the above formula for $\Es{L_T}$ and Markov's inequality give
\begin{equation}\label{e.leftover}
 \P\zar{L_T \leq   \exp\zar{ \zar{\frac{k}{n}-\frac\eps2\frac{n-k}{n}} \log k }  } \geq 1-k^{-\eps/4}\,.
\end{equation}
The point is that this is $o(\sqrt{k})$ with high probability, hence it will be possible to use the upper bound of Theorem~\ref{t.laleold} for the location of these leftover particles within the clique $\K$. Of course, this upper bound also  follows from our proof in Subsection~\ref{ss.medium}, since at this time all particles have left $\K$ whp, and they are at uniform random positions in $\K^c$. 

The non-purple particles may be either in $\K^c$ or in $\K$, but in either case, since moving between the two parts always happens to a uniform random location, their positions within their parts is uniform. Moreover, the attempted transpositions of the non-purple particles within themselves and with the empty locations are independent of the attempted transpositions of purple particles within themselves and with the empty locations, except for their numbers. Thus, using the argument of (\ref{e.product1},\ref{e.product2}), once enough time has passed so that 
\begin{enumerate}
\item[(1)] the number of particles that are currently in $\K$ is close to its stationary distribution,
\item[(2)] and the location of the purple particles (conditioned on their number) is close to uniform within $\K$, 
\end{enumerate}
then the entire configuration is close to stationarity.

The process of item (1) is simply a time-changed Bernoulli--Laplace model, whose mixing time can be estimated by our previous results. Namely, we have now two urns, of sizes $k$ and $n-k$, with $\Omega(\sqrt{n-k}) \leq k \leq n-k$, hence case~(\ref{e.largem}) of Theorem~\ref{maintheo} applies, except that the speed of the bridge transpositions is not $\frac{1}{2|E|}$, but $\frac{k(n-k)}{ 2{n \choose 2}}$. Of course, we have a random time change, but, by the law of large numbers, the mixing times can just be multiplied by these speeds. So, we get a mixing time
$$
\frac{1}{2|E|} \frac{2{n\choose 2}}{k(n-k)} \frac{|E| k (n-k)}{k+n-k} \log (n-k) = \frac{1+o(1)}{2} n \log n \leq (1+o(1))\, n \log k \,,
$$
with cutoff, where the last inequality used that $c\sqrt{n}\leq k$.

By \cite[Proposition 4.7]{LPW}, there is an optimal coupling between our time-changed  Bernoulli--Laplace process at time $T$ and its stationary distribution, such that the number of particles currently in $\K$ is the same in the two, with probability close to 1. This coupling can be pulled back to a coupling between the exclusion process and its stationary distribution. Conditioning on the event of successful coupling can change the probability of any event only by a small additive amount, hence the bound of~(\ref{e.leftover}) on $L_T$ still holds with high probability. 

To understand item (2), condition on $L_T=\ell$ and on the identity of these $\ell$ particles, for any fixed $\ell$ that satisfies the bound of~(\ref{e.leftover}). Look at the movement of the $\ell$ purple particles within $\K$ during the $T$ steps. We have the conditioning that, for each purple particle, there is no transposition going to $\K^c$ before time $T$, while, for every non-purple particle, there is at least one transposition going to $\K^c$. This conditioning certainly changes the distribution of the number $\gamma$ of attempted transpositions by time $T$ within $\K$, but it  does not change the fact that the number $\gamma^*$ among these transpositions that actually happen still has distribution $\mathsf{Binom}(\gamma,1/2)$, and it does not break the symmetry between these transpositions: each is uniformly distributed among the edges in $\K$, independently from each other. Therefore, we only need to determine if, under the conditioning, $\gamma$ is large enough with high probability for the mixing of the purple particles in $\K$.

Without the conditioning, the distribution of $\gamma$ is $\mathsf{Binom}\zar{T,\frac{{k\choose 2}}{{n\choose 2}}}$. This has expectation $(1+\eps+o(1)) \frac{k^2 \log k}{n}$, which goes to infinity with $n$ because of the condition $k\geq c\sqrt{n}$. Hence, by a standard large deviations bound, 
\begin{equation}\label{e.gamma1}
\PB{\gamma < \zar{1+\frac{\eps}{2}} \frac{k^2 \log k}{n} } < \exp\zar{-c_\eps  \frac{k^2 \log k}{n} }\,,
\end{equation}
with some $c_\eps > 0$ that depends only on $\eps$. Using the bound of~(\ref{e.leftover}) on $\ell$,
\begin{equation}\label{e.gamma2}
\zar{1+\frac{\eps}{2}} \frac{k^2 \log k}{n} \geq \zar{1+\frac{\eps}{2}} k \log \ell\,,
\end{equation}
and we also have $\ell \ll \sqrt{k}$, hence the upper bound in Theorem~\ref{t.laleold} would tell us that, without the conditioning, $\gamma$ would be large enough. But what is the effect of the conditioning?
 
Let $\alpha_i$ and $\alpha^*_i$, for $i=1,\dots,\ell$, denote the number of attempted and actual transpositions between the $i^\mathrm{th}$ purple particle and $\K^c$. Similarly, let $\beta_j$ and $\beta^*_j$, for $j=1,\dots,k-\ell$, denote the number of attempted and actual transpositions between the $j^\mathrm{th}$ non-purple particle and $\K^c$. We want to show that
$$
\PB{ \gamma < \zar{1+\frac{\eps}{2}} \frac{k^2 \log k}{n} \md  \forall i\, \alpha^*_i=0,\ \forall j\, \beta^*_j\geq 1} \to 0,
$$
as $n\to\infty$. The conditioning on $\{\forall i\, \alpha^*_i=0\}$ can only stochastically increase the distribution of $\gamma$, hence we can ignore it. On the other hand, the conditioning on $\{\forall j\, \beta^*_j\geq 1\}$ will turn out not to be too drastic, because the event itself is not extremely unlikely. For each $j$,
\begin{equation*}
\Pb{ \beta^*_j = 0} = \zar{ 1-\frac12\frac{n-k}{{n\choose 2}} }^T = \exp\zar{ -(1+\eps+o(1))\frac{n-k}{n}\log k}.
\end{equation*}
Inductively adding more and more $j$'s, one can easily see that the events $\{ \beta^*_j \geq 1\}$ are all negatively correlated with each other, hence
\begin{equation}\label{e.beta}
\begin{aligned}
\Pb{ \forall j\, \beta^*_j\geq 1} \geq \Pb{ \beta^*_j \geq 1}^{k-\ell} 
&\geq \zar{1-\exp\zar{ -(1+\eps+o(1))\frac{n-k}{n}\log k}}^k\\
&= \exp\zar{  -(1+\eps+o(1))\frac{n-k}{n}\log k + \log k } \\
&\geq \exp\zar{ (1+\eps)\frac{k}{n}\log k},
\end{aligned}
\end{equation}
where the last inequality holds if $n$ is large enough.

Now, denoting the events $\cG:=\left\{\gamma < \zar{1+\frac{\eps}{2}} \frac{k^2 \log k}{n}\right\}$ and $\cB:=\left\{\forall j\, \beta^*_j\geq 1\right\}$, the bounds~(\ref{e.gamma1}) and~(\ref{e.beta}) give us
$$
\Ps{\cG \md \cB} = \frac{\Ps{\cG \cap \cB}}{\Ps{\cB}} \leq  \frac{\Ps{\cG}}{\Ps{\cB}} < \exp\zar{ -c_\eps  \frac{k^2 \log k}{n}  + (1+\eps)\frac{k}{n}\log k  }\to 0,
$$
where the convergence to 0 holds because $k$ goes to infinity. As explained above, this finishes the proof for the case $k\leq n/2$.

Finally, we reduce the case of $k > n/2$ to the case of $k\leq n/2$ by the following simple trick. Color the first $n/2$ particles red, the remaining $k-n/2$ particles blue. Also, think of the $n-k$ unlabelled empty locations as labelled white particles. By time $T=(1+\eps) n\log n$, the red particles are $\delta$-close to stationarity, and the blue and white particles together are $\delta$-close to stationarity, as labelled exclusion processes. This means that the positions of the red, blue, white particles relative to each other, the permutation of the red particles among each other, and the permutation of the blue and white particles among each other, this data altogether is $2\delta$-close to stationarity. Moreover, the attempted transpositions that have happened within the red and within the blue-white groups are also independent from each other, except for their numbers. Again by the argument of (\ref{e.product1},\ref{e.product2}), this means that the entire configuration is close to stationarity, and we are done. (Note where the laziness for the $k=n-1,n$ cases is used: without the laziness, the sum of the numbers of actual transpositions between the differently coloured groups would be fixed at any given time, and although equations (\ref{e.product1},\ref{e.product2}) would still hold, the resulting conditional TV-distances would not be small at all: at any odd time, the measure would be concentrated on odd permutations.)
\qqed

\end{document}